\newcommand{\IR}{\ensuremath{\mathbb{R}}}
\newcommand{\IN}{\ensuremath{\mathbb{N}}}
\newcommand{\IC}{\ensuremath{\mathbb{C}}}
\newcommand{\IK}{\ensuremath{\mathbb{K}}}
\newcommand{\IP}{\ensuremath{\mathbb{P}}}
\newcommand{\IE}{\ensuremath{\mathbb{E}}}
\newcommand{\norm}[1]{\left\Vert#1\right\Vert}
\newcommand{\set}[1]{\left\{#1\right\}}
\newcommand{\abs}[1]{\left|#1\right|}
\newcommand{\brackets}[1]{\left(#1\right)}
\renewcommand{\d}{{\rm d}} 
\newcommand{\scalar}[2]{\left\langle#1,#2\right\rangle}
\newcommand{\diff}{D}
\newtheorem{thm}{Theorem}
\newtheorem{cor}{Corollary}
\theoremstyle{plain}
\newtheorem{lemma}{Lemma}
\theoremstyle{definition}
\newtheorem{ex}{Example}
\newtheorem{rem}{Remark}
\newtheorem*{ack}{Acknowledgements}
\newtheorem*{alg}{Algorithm}
\title{Optimal Monte Carlo methods for $L^2$-approximation} 
\author{
David Krieg
\\ 
Mathematisches Institut, Universit\"at Jena\\ 
Ernst-Abbe-Platz 2, 07743 Jena, Germany  \\ 
david.krieg@uni-jena.de}
\date{\today}
\begin{document}

\maketitle

\begin{abstract}
 We construct Monte Carlo methods
 for the $L^2$-approximation in Hilbert spaces of multivariate functions
 sampling not more than $n$ function values of the target function.
 Their errors catch up with the rate of convergence and the preasymptotic behavior 
 of the error of any algorithm
 sampling $n$ pieces of arbitrary linear information,
 including function values.
\end{abstract}

\medskip\noindent 
AMS classification: 41A25, 41A63, 65C05, 65D15, 65D30, 68Q25, 65Y20.

\medskip\noindent
Key words: 
Approximation of multivariate functions, Monte Carlo methods, 
optimal order of convergence, preasymptotic estimates, multivariate integration.

\section{Introduction}

Assume we want to approximate an unknown real or complex valued function
on a set $D$ based on a finite number $n$ of function values
which may be evaluated at randomly and adaptively chosen points.
In general, these function values do not determine the function uniquely and so
we cannot expect our approximation to be correct.
We make an approximation error which we measure in the space $L^2(D,\mathcal{A},\mu)$
of quadratically integrable functions on $D$ with respect to an arbitrary measure $\mu$.
In order to make any meaningful statement regarding this error,
we need to have additional a priori knowledge of the unknown function.
Here, we assume structural knowledge 
of the form that it is contained in the unit ball $F_\circ$ of a Hilbert space
$F$ which is compactly embedded in $L^2(D,\mathcal{A},\mu)$.
For instance, it may be bounded with respect to some Sobolev norm on
a compact manifold $D$.
The error of the randomized algorithm or Monte Carlo method
$A_n$ is the quantity
\begin{equation*}
 e^{\rm ran}(A_n)
 = \sup\limits_{f\in F_\circ} \brackets{\IE \int_D \abs{f-A_n(f)}^2 ~\d\mu}^{1/2} 
.\end{equation*}
The error of an optimal randomized algorithm that ask for at most $n$
function values is denoted by
\begin{equation*}
 e(n)=\inf_{A_n} e^{\rm ran}(A_n).
\end{equation*}
While it seems impossible to provide such algorithms,
the optimal deterministic algorithm
evaluating $n$ arbitrary linear functionals
is well known.
It is given by the orthogonal projection $P_n$
onto the span of the first $n$ functions
in the singular value decomposition of the
embedding $T:F\hookrightarrow L^2$.
Its worst case error is the $(n+1)$-st largest singular value or approximation number
$\sigma(n+1)$ of that embedding, 
the square root of the $(n+1)$-st largest eigenvalue of 
the operator $W=T^*T$.

The algorithm $P_n$ asks for the first $n$ coefficients of $f$
with respect to the singular value decomposition of the embedding $T$.
In most applications, however, it is not possible to sample these coefficients
and we may only make use of function values.
This leads to the following questions:
\begin{itemize}
 \item How does the error $e(n)$ of optimal randomized algorithms 
 using $n$ function values compare to 
 the the error $\sigma(n+1)$ of the orthogonal projection $P_n$?
 \item If possible, find a randomized algorithm $A_n$ 
 whose error is close to $\sigma(n+1)$.
\end{itemize}
These are not new questions in the fields of Monte Carlo methods and information-based complexity.
There are several results 
for particular spaces $F$
where $e(n)$
behaves similarly to the error of $P_n$. 
See, for instance, Traub,
Wasilkowski and Woźniakowski \cite{tww},
Mathé \cite{m} and Heinrich~\cite{h}.
Results by Cohen, Davenport and Leviatan \cite{cdl}
and Cohen and Migliorati \cite{cm} contain a similar message,
see Remark~\ref{knowledge remark}.
In 1992, Novak \cite{n}
proved that
\begin{equation*}
 e(n) \geq \frac{\sigma(2n)}{\sqrt{2}}
\end{equation*}
holds for arbitrary spaces $F$.
This means that optimal randomized algorithms using $n$ function values
are never much better than the orthogonal projection $P_n$.
On the other hand, Wasilkowski and Woźniakowski \cite{ww}
proved in 2006 that
\begin{equation*}
 \sigma(n) \preccurlyeq n^{-p} (\ln n)^q \quad
 \Rightarrow \quad
 e(n) \preccurlyeq n^{-p} (\ln n)^q (\ln \ln n)^{p+1/2}
\end{equation*}
for all $p>0$ and $q\geq 0$.
Here, we write $x_n\preccurlyeq y_n$ 
if there is some $C>0$ and $n_0 \in \IN$ 
such that $x_n\leq C y_n$ for all $n \geq n_0$.
If $x_n \preccurlyeq y_n$ and $y_n \preccurlyeq x_n$,
we write $x_n\asymp y_n$.
This means that optimal randomized algorithms using function values
are always almost as good as the orthogonal projection $P_n$.
The proof of this result is constructive.
It raises the question whether
the additional power of the double logarithm is necessary or not.
In fact, Novak and Woźniakowski showed in 2012 
that this is not the case for $q=0$, that is
\begin{equation*}
 \sigma(n) \preccurlyeq n^{-p} \quad
 \Rightarrow \quad
 e(n) \preccurlyeq n^{-p}
\end{equation*}
for all $p>0$.
The proof of this result, however, is not constructive.
Both proofs can be found in their monograph \cite[Chapter~22]{track3}.
In the present paper, we prove the corresponding statement
for $q>0$.
More generally, we consider upper bounds with 
the following property.
We say that the sequence
$L:\IN\to (0,\infty)$ is \emph{regularly decreasing} 
if there is some $r\geq 0$ such that
\begin{equation}
 \label{regularly decreasing}
 L(m) \geq 2^{-r} L(n) \quad \text{whenever}\quad n \leq m\leq 2n.
\end{equation}
If there is some $n_0\in\IN$ such that $L(n)$ is nonincreasing 
for $n\geq n_0$,
this is equivalent to $L(2n) \asymp L(n)$.
Property~\eqref{regularly decreasing} is satisfied
if $L(n)n^r$ is nondecreasing.
The sequence
\begin{equation*}
 L(n) = n^{-p} \brackets{1+\log_2 n}^q
\end{equation*}
is regularly decreasing
for any $p>0$ and $q\geq 0$.
It satisfies \eqref{regularly decreasing} for $r=p$.
Another example is
\begin{equation*}
 L(n) = \brackets{1+\log_2 n}^{-q}
\end{equation*}
for any $q>0$,
which satisfies \eqref{regularly decreasing} for $r=q$.
The sequence is not regularly decreasing 
if it decays exponentially or has huge jumps.
We obtain the following result.

\begin{thm}
\label{weaker theorem}
 If $L:\IN\to (0,\infty)$ is regularly decreasing, then
 \begin{equation*}
 \sigma(n) \preccurlyeq L(n) \quad
 \Rightarrow \quad
 e(n) \preccurlyeq L(n).
\end{equation*}
\end{thm}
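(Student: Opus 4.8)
\quad
The plan is to construct explicitly, for every $n$, a randomized algorithm $A_n$ that samples at most $cn$ function values (with $c$ an absolute constant) and has error $e^{\rm ran}(A_n)\lesssim L(n)$; one then rescales, using that $L$ is regularly decreasing, so that $L(cn)\asymp L(n)$. We may assume $\sigma(n)\le L(n)$ for all $n$, the constant in $\sigma\preccurlyeq L$ and finitely many small indices being harmless for the same reason. Fix the singular value decomposition of $T:F\hookrightarrow L^2$: an orthonormal system $(b_k)_{k\ge1}$ in $L^2$ such that for $f\in F$ the $k$-th singular coefficient is $\scalar{f}{b_k}_{L^2}$, and for $V_m=\vspan\{b_1,\dots,b_m\}$ the $L^2$-orthogonal projection $P_m$ satisfies $\norm{f-P_mf}_{L^2}^2=\sum_{k>m}\abs{\scalar{f}{b_k}_{L^2}}^2\le\sigma(m+1)^2\norm{f}_F^2$.

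The basic building block is a weighted least-squares estimator. To a subspace $V_m$ I associate its Christoffel density $\varrho_m=\tfrac1m\sum_{k\le m}\abs{b_k}^2$, a probability density with respect to $\mu$; given sample nodes and positive weights, if the weighted Gram matrix $G$ of $b_1,\dots,b_m$ satisfies $\norm{G-I}\le\tfrac12$ the algorithm returns the weighted least-squares projection of $f$ onto $V_m$, and $0$ otherwise. When the nodes are drawn i.i.d.\ from $\varrho_m\d\mu$ one has $\IE G=I$ and, for fixed $f$,
\[
 \IE\norm{f-A(f)}_{L^2}^2\ \le\ \Big(1+\tfrac{4m}{N}\Big)\,\norm{f-P_mf}_{L^2}^2\ +\ \sigma(1)^2\,\IP\!\big(\norm{G-I}>\tfrac12\big),
\]
where $N$ is the number of nodes. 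The decisive feature is that the right-hand side involves $\norm{f-P_mf}_{L^2}^2\le\sigma(m+1)^2$, the \emph{true} tail of the individual $f$ rather than a worst-case quantity; this is why randomization can match $\sigma$ even when $\sum_k\sigma(k)^2$ diverges, in contrast with the deterministic theory. The price is that a matrix Chernoff bound forces $N\gtrsim m\log m$ i.i.d.\ nodes to make the failure probability small. To avoid this logarithmic oversampling \emph{in the number of function values}, I draw a larger i.i.d.\ pool and then — using that $b_1,\dots,b_m$ are known a priori — run a deterministic Batson--Spielman--Srivastava-type sparsification on the vectors $\big(b_1(x_i),\dots,b_m(x_i)\big)$ to select $O(m)$ nodes with weights for which $\norm{G-I}\le\tfrac12$ holds with certainty on the (high-probability) event that the pool is spectrally good; $f$ is then evaluated only at those $O(m)$ nodes.

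From this block one assembles a multilevel method. Partition the index set into blocks $I_1,I_2,\dots,I_J$ with right endpoints $m_1<m_2<\dots<m_J$ chosen so that $L$ decreases by a bounded factor from block to block, with $m_J\asymp n$. Block $I_j$ is treated by an independent least-squares estimator that reads off the coordinates in $I_j$; since its error lies in $\vspan\{b_k:k\in I_j\}$ and the blocks are $L^2$-orthogonal, the block errors add in squares, and together with the truncation term one obtains a master bound of the form
\[
 e^{\rm ran}(A_n)^2\ \lesssim\ \sigma(m_J+1)^2\ +\ \sum_{j\le J}\frac{m_j}{n_j}\,\sigma(m_j+1)^2\ +\ (\text{failure terms}),\qquad \sum_j n_j\le n,\ \ n_j\gtrsim m_j.
\]
The first term is $\lesssim L(m_J)^2\lesssim L(n)^2$. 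The sum is controlled by allocating more nodes to the coarse blocks, and here a dichotomy is convenient. If $\tfrac1n\sum_{k\ge cn}\sigma(k)^2\lesssim L(n)^2$ for all $n$ — which holds in a ``fast decay'' regime, in particular when $\sigma\in\ell^2$ — the subsampled least-squares operator of the previous paragraph already achieves error $\lesssim\big(\tfrac1n\sum_{k\ge cn}\sigma(k)^2\big)^{1/2}\lesssim L(n)$ with $O(n)$ nodes, essentially at a single level (this is the deterministic sampling-recovery estimate, and $e(n)$ is no larger). In the complementary case $L$ is slowly decreasing, hence $L(n/\log n)\asymp L(n)$, so the logarithmic oversampling in a \emph{plain} i.i.d.\ least-squares estimator no longer costs anything; one uses such estimators on the blocks, with only $J\asymp\log\!\big(L(1)/L(n)\big)$ levels and the $n_j$ tuned against the geometric decay of $L$ along the blocks, and sums up. In both cases the final summation is a geometric-type series whose constant depends only on $r$, by \eqref{regularly decreasing}.

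The main obstacle is the step highlighted above: turning random nodes into an $O(m)$-element design on which the least-squares normal equations are well conditioned, without the logarithmic loss, while keeping enough probabilistic structure to retain the $\norm{f-P_mf}^2$ error term rather than a useless $\norm{f}^2$ term. A secondary difficulty is bookkeeping: the levels and the allocation $\{n_j\}$ must be arranged so that the failure events over the up to $\log n$ many levels contribute in total no more than $L(n)^2$ to the error.
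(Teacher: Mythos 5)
Your proposal takes a genuinely different route from the paper, and as it stands it has gaps that the paper's argument is specifically designed to avoid. The paper does not use least-squares estimators, Gram matrices, or any subsampling at all. Its building block is the plain Monte Carlo estimator $\frac{1}{n_k}\sum_i g_j(X_i^{(k)})$ of the coefficient $\scalar{f-M^{(k-1)}f}{b_j}_2$ of the \emph{residual} from the previous level, with nodes drawn from the Christoffel density $u_{m_k}$. Summing the variances over $j\le m_k$ gives exactly $\frac{m_k}{n_k}\,\IE\norm{f-M^{(k-1)}f}_2^2$, whence the recursion
\begin{equation*}
 e^{\rm ran}\brackets{M^{(k)}}^2 \le \frac{m_k}{n_k}\, e^{\rm ran}\brackets{M^{(k-1)}}^2 + \sigma(m_k+1)^2 .
\end{equation*}
Because every level re-estimates \emph{all} leading coefficients of the current residual, whose norm has already been driven down to roughly $L(2^{k-1})$, the coarse coefficients are corrected at every level and the errors contract by the constant factor $m_k/n_k=2^{-\ell_r}$; choosing $\ell_r=\lceil 2r+1\rceil$ beats the growth $2^{2r}$ of $L$ per level and closes the induction. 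There are no failure events, no conditioning issues, and no need to choose between regimes.

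Your scheme, by contrast, has each block estimate its own coordinates directly from samples of $f$, so the variance of block $j$ carries the factor $\sigma(m_j+1)^2\approx L(m_j)^2$, which is large for coarse blocks. The optimal allocation of a budget $\sum_j n_j\le n$ then yields $\bigl(\sum_j\sqrt{m_j}\,L(m_j)\bigr)^2/n$ for the sum in your master bound, and for $L(n)=n^{-p}$ with $p\ge 1/2$ this is of order $1/n \gg L(n)^2$: a single-pass block decomposition without residual correction cannot reach the rate. Your dichotomy is meant to patch this but does not cover all regularly decreasing $L$: the ``fast'' branch needs $\frac1n\sum_{k\ge cn}\sigma(k)^2\preccurlyeq L(n)^2$, which fails already for $\sigma(k)=L(k)=k^{-p}$ with $p\le 1/2$ (the tail sum diverges), while the inference ``$L$ slowly decreasing $\Rightarrow L(n/\log n)\asymp L(n)$'' in the other branch is false for every polynomially decaying $L$, since $L(n/\log n)/L(n)=(\log n)^{p}\to\infty$. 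So, e.g., $\sigma(n)=L(n)=n^{-1/4}$ falls into neither case. Finally, the step you yourself flag as the main obstacle --- a Batson--Spielman--Srivastava subsampling that keeps $O(m)$ nodes while preserving an error bound in terms of $\norm{f-P_mf}_2^2$ for the unknown residual, not just the conditioning of the Gram matrix of $V_m$ --- is a genuine open technical problem at this level of generality (it requires a two-sided subsampling theorem and, where it is known, produces bounds involving $\sum_{k\ge cn}\sigma(k)^2$, hence again $\ell^2$-summability). None of this machinery is needed: the iterative variance-reduction recursion above proves Theorem~\ref{explicit theorem} directly, and Theorem~\ref{weaker theorem} follows by applying it to a constant multiple of $L$.
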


This solves Open Problem~99 as posed by Novak and Woźniakowski in \cite{track3}. 
One problem with this result is that it does not provide any algorithm,
it only states the existence of good algorithms.
Another problem is that the error bound is only asymptotic.
The preasymptotic behavior
of $e(n)$ may, however, be very different from its asymptotic
behavior. 
This is typically the case if the set $D$ is a domain in
high dimensional euclidean space.

These problems are tackled by Theorem~\ref{explicit theorem}.
In Section~\ref{main section}, we provide a randomized algorithm
$A_n^r$ for any $n\in\IN$ and $r\geq 0$.
This algorithm is a refinement of the algorithm proposed
by Wasilkowski and Woźniakowski \cite{ww}.
It asks for at most $n$ function values and
satisfies the following error bound.

\begin{thm}
 \label{explicit theorem}
 Assume that $L:\IN\to (0,\infty)$ satisfies \eqref{regularly decreasing}
 and let $c_r=2^{r\lceil 2r+3\rceil +1}$.
 \begin{align*}
 &\text{If}\quad
 &\sigma(n) &\leq L(n)
 \quad&\text{for all}\qquad
 &n\in\IN,\\
 &\text{then}\quad
 &e^{\rm ran}(A_n^r) &\leq c_r\, L(n)
 \quad&\text{for all}\qquad
 &n\in\IN.
 \end{align*}
\end{thm}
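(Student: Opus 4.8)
\emph{Plan.} The plan is to realise $A_n^r$ as a multilevel Monte Carlo method, built from a single-level random-sampling estimator whose construction and error analysis I take over from the preceding part of the paper. That estimator has two parameters: a target dimension $m$ and a number $N$ of sampled function values. It returns an approximation lying in the span $V_m$ of the first $m$ singular functions of $T$, with expected squared $L^2$-error bounded by the best approximation error $\sigma(m+1)^2$ from $V_m$ plus a variance term controlled by a bounded multiple of $m/N$ times the squared $L^2$-norm of the part of its input not lying in $V_m$. The idea is to run independent copies of this estimator at geometrically increasing resolutions $m_0<m_1<\dots<m_s$, each copy applied to the residual left by the previous ones, so that the error is reduced block by block; what remains is to choose the number of levels, the resolutions and the sample budgets, and to sum up the contributions.

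Concretely I would take $s$ of order $r$, resolutions $m_0<\dots<m_s$ that roughly double from level to level with $m_s$ comparable to $n$, and budgets $N_0,\dots,N_s$ with each $N_j$ of order $m_j$ (up to $r$-dependent constants) and $\sum_{j=0}^{s}N_j\le n$. Level $j$ is charged with approximating the block $(P_{m_j}-P_{m_{j-1}})Tf$, with $P_{m_{-1}}:=0$: it applies the single-level estimator for $V_{m_j}$ to the residual left by levels $0,\dots,j-1$ — which can be evaluated pointwise, being $f$ minus an explicitly known element of $V_{m_{j-1}}$ — using $N_j$ fresh function values, and keeps only the component in $V_{m_j}\ominus V_{m_{j-1}}$. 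Since the cumulative output through level $j-1$ lies in $V_{m_{j-1}}$, that component equals $(P_{m_j}-P_{m_{j-1}})Tf$ exactly, while the variance of the level-$j$ estimate is governed by the squared $L^2$-norm of the residual after level $j-1$, which is at most a bounded multiple of $\sigma(m_{j-1})^2\le L(m_{j-1})^2$.

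For the error analysis, the level outputs lie in mutually orthogonal blocks and are orthogonal to the unresolved tail $(I-P_{m_s})Tf$, so
\begin{equation*}
  \IE\,\bigl\Vert Tf-A_n^r(f)\bigr\Vert_{L^2}^2
  =\sum_{j=0}^{s}\IE\,\bigl\Vert(P_{m_j}-P_{m_{j-1}})Tf-(\text{output of level }j)\bigr\Vert_{L^2}^2
  +\bigl\Vert(I-P_{m_s})Tf\bigr\Vert_{L^2}^2 .
\end{equation*}
The tail term is at most $\sigma(m_s+1)^2\le L(m_s+1)^2$, which by iterating \eqref{regularly decreasing} a bounded number of times, from resolution $m_s$ up to $n$, is at most a bounded power of $2^r$ times $L(n)^2$. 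On each level the best-approximation parts telescope out, leaving only the variance term, which is $\lesssim\sigma(m_{j-1})^2\le L(m_{j-1})^2\le 2^{2r(s-j+1)}L(n)^2$, again by iterating \eqref{regularly decreasing}; the sum over $j$ is a geometric series which, with the resolutions and budgets chosen as above, is controlled by a bounded power of $2^r$ times $L(n)^2$. Adding the tail, taking square roots and carefully tracking the constants through the iterations of \eqref{regularly decreasing} and the geometric summation gives $e^{\rm ran}(A_n^r)\le c_r L(n)$ with $c_r=2^{r\lceil 2r+3\rceil+1}$, the exponent $r\lceil 2r+3\rceil$ reflecting on the order of $\lceil 2r+3\rceil$ doublings of the resolution, each costing a factor $2^r$ through \eqref{regularly decreasing}. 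Two preliminary reductions are needed: one first replaces $L$ by a nonincreasing majorant, which by \eqref{regularly decreasing} costs only a factor that can be absorbed into $c_r$; and one disposes of the finitely many small $n$ for which the construction degenerates by noting that there $c_r L(n)\ge\sigma(1)\ge e^{\rm ran}(A_n^r)$, since $L(n)\ge 2^{-r\lceil\log_2 n\rceil}L(1)\ge 2^{-r\lceil\log_2 n\rceil}\sigma(1)$ while $2^{r\lceil\log_2 n\rceil}\le c_r$ for those $n$.

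The main obstacle is the simultaneous balancing of the three requirements $\sum_j N_j\le n$, ``each level's variance term is a bounded power of $2^r$ times $L(n)^2$'', and ``$\sigma(m_s+1)$ is a bounded power of $2^r$ times $L(n)$'': pushing $m_s$ closer to $n$ helps the tail but inflates every $N_j$, and the tension is resolved only by exploiting \eqref{regularly decreasing}. Precisely because that property lets one settle for $m_s$ merely comparable to $n$, rather than having to drive the resolution as close to $n$ as possible, it collapses the \emph{a priori} unbounded number of refinement levels — the origin of the $(\ln\ln n)$-loss of Wasilkowski and Woźniakowski — into a fixed, $r$-dependent number. Threading the single-level estimator's error bound through the randomness of the successive residuals, and choosing the number of levels, the $m_j$ and the $N_j$ so that the final constant comes out as exactly $c_r$, is where the bookkeeping has to be done with care; the rest is routine.
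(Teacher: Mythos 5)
Your plan is essentially the paper's own proof: the same stepwise residual-correction scheme in which the resolution $m_j$ doubles while lagging a fixed $r$-dependent number of levels behind the doubling sample budget $n_j$ (constant ratio $m_j/n_j$), analyzed via the single-level error bound of Lemma~\ref{error lemma} and an induction over levels in which each application of \eqref{regularly decreasing} costs a factor $2^r$. The only cosmetic differences are that you retain just the new coefficient block at each level and phrase the analysis as an orthogonal block decomposition rather than a recursion on the full mean-square error; two small imprecisions worth fixing are that the residual entering the level-$j$ variance is bounded (inductively) by a multiple of $L(m_{j-1})^2$ rather than of $\sigma(m_{j-1})^2$, and that the ``nonincreasing majorant'' reduction is both unnecessary and unavailable in general (\eqref{regularly decreasing} forbids fast decay, not growth) --- one simply uses \eqref{regularly decreasing} directly in the form $L(2^{k-1})\leq 2^r L(2^k)$, as you in fact do later.
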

The constant $c_r$ only 
depends on the order $r$.
If $D$ is a domain in $d$-dimensional euclidean space,
this order is often independent of $d$ or even
strictly decreasing with $d$.
See Section~\ref{main section} for the definition of this
algorithm and several examples.

We find that the error of randomized algorithms
using $n$ function values of the target function can get very close to
the error of the orthogonal projection $P_n$
and that this is achieved by the algorithm $A_n^r$.

In Section~\ref{int section}, we use these algorithms for the integration
of functions $f$ in $F$ with respect to probability measures $\mu$.
We simply exploit the relation
\begin{equation*}
 \int_D f~\d\mu = \int_D A_n^r f~\d\mu + \int_D (f- A_n^r f)~\d\mu.
\end{equation*}
We compute the integral of $A_n^r f$
and use a direct simulation to approximate the integral of $(f\,$--$\,A_n^r f)$,
which has a small variance.
This technique is called variance reduction and widely used
for Monte Carlo integration.
See Heinrich \cite[Theorem~5.3]{h} for another example.
Even if $D$ is a high dimensional domain,
the resulting method can significantly improve on the error of a sole direct simulation
for a relatively small number of samples.

These results are based on the a priori knowledge
that our target function is contained in the unit ball
of the space $F$.
In Section~\ref{knowledge section}, we discuss
how this assumption can be weakened.

\section{The Setting}
\label{setting section}

Let $(D,\mathcal{A},\mu)$ be a measure space and $\IK\in\set{\IR,\IC}$.
The space $L^2=L^2(D,\mathcal{A},\mu)$ is the space of quadratically integrable
$\IK$-valued functions on $(D,\mathcal{A},\mu)$,
equipped with the scalar product
\begin{equation*}
 \scalar{f}{g}_2=\int_D f\cdot \overline{g}~\d\mu.
\end{equation*}
Let $F$ be a second Hilbert space and $F_\circ$ be its unit ball.
We assume that $F$ is a subset of $L^2$ and that
\begin{equation*}
 T: F \to L^2,\quad Tf=f
\end{equation*}
is compact.
With the embedding $T$ we associate a positive semi-definite and compact operator $W=T^*T$ on the space $F$.
By the spectral theorem, there is a (possibly finite) orthogonal basis $\mathcal B=\set{b_1,b_2,\dots}$ of $F$,
consisting of eigenvectors corresponding to a nonincreasing zero sequence $(\lambda_n)_{n\in\IN}$ 
of eigenvalues of $W$.
Let $N$ be the cardinality of $\mathcal{B}$.
One can easily check that $\mathcal{B}$ is orthogonal in $L^2$, as well.
We take the eigenvectors $b_n$ to be normalized in $L^2$.
We call this basis the singular value decomposition of $T$.\footnote{
This term is more commonly used to refer to the representation
$Tf=\sum_{b\in\mathcal{B}} \scalar{f}{b} Tb$ of the compact operator.
Here, the altered terminology shall ease the notation.}
The number $\sigma(n)=\sqrt{\lambda_n}$ is called its $n$-th singular value
or approximation number.

The worst case error of a deterministic algorithm $A:F\to L^2$ 
is the quantity
\begin{equation*}
 e^{\rm det}(A)= \sup\limits_{f\in F_\circ} \norm{f- A(f)}_2.
\end{equation*}
The worst case error of a measurable randomized algorithm
\begin{equation*}
 A:F\times\Omega\to L^2, \quad (f,\omega)\to A^{\omega}(f),
\end{equation*}
where $\Omega$ is the sample space of some probability space $(\Omega,\mathcal{F},\IP)$,
is the quantity
\begin{equation*}
 e^{\rm ran}(A)= \sup\limits_{f\in F_\circ} \brackets{\IE_{\omega} \norm{f- A^{\omega}(f)}_2^2}^{1/2}.
\end{equation*}
We usually skip the $\omega$ in the notation.
See Novak and Woźniakowski \cite[Chapter~4]{track1} for a precise definition of such algorithms.
We furthermore define the following minimal worst case errors
within certain classes of algorithms.

The quantity
\begin{equation*}
 e^{\rm det}(n,T,\Lambda^{\rm all}) = \inf\limits_{A\in\mathcal{A}_n^{\rm det, all}} e^{\rm det}(A)
\end{equation*}
is the minimal worst case error within the class $\mathcal{A}_n^{\rm det, all}$ of all
deterministic algorithms evaluating at most $n$ linear functionals of the input function.

The quantity
\begin{equation*}
 e^{\rm ran}(n,T,\Lambda^{\rm all}) = \inf\limits_{A\in\mathcal{A}_n^{\rm ran, all}} e^{\rm ran}(A)
\end{equation*}
is the minimal worst case error within the class 
$\mathcal{A}_n^{\rm ran, all}$ of all
measurable randomized algorithms evaluating 
at most $n$ linear functionals.

The quantity
\begin{equation*}
 e^{\rm det}(n,T,\Lambda^{\rm std}) = 
 \inf\limits_{A\in\mathcal{A}_n^{\rm det, std}} e^{\rm det}(A)
\end{equation*}
is the minimal worst case error within 
the class $\mathcal{A}_n^{\rm det, std}$ of all
deterministic algorithms evaluating at most $n$ function
values of the input function.

The quantity
\begin{equation*}
 e(n) = e^{\rm ran}(n,T,\Lambda^{\rm std}) = \inf\limits_{A\in\mathcal{A}_n^{\rm ran, std}} e^{\rm ran}(A)
\end{equation*}
finally is the minimal worst case error within the 
class $\mathcal{A}_n^{\rm ran, std}$ of all
measurable randomized algorithms evaluating 
at most $n$ function values.
This is the error to be analyzed.
It was proven by Novak \cite{n}
that
\begin{equation}
 \label{lower bound}
 e^{\rm ran}(n,T,\Lambda^{\rm std})
 \geq e^{\rm ran}(n,T,\Lambda^{\rm all})
 \geq \frac{1}{\sqrt{2}} e^{\rm det}(2n-1,T,\Lambda^{\rm all}).
\end{equation}
The error $e^{\rm det}(n,T,\Lambda^{\rm all})$
is known to coincide with $\sigma(n+1)$. 
We refer to Novak and Woźniakowski \cite[Section~4.2.3]{track1}.
The infimum is attained for the nonadaptive linear algorithm
\begin{equation*}
 P_n: F\to L^2, \quad P_n(f)=\sum_{k=1}^{n\land N} \scalar{f}{b_k}_2 b_k
.\end{equation*}
Here, $\log_2 x$ denotes the logarithm of $x>0$ in base 2,
whereas $\ln x$ denotes its natural logarithm.
The minimum of $a\in\IR$ and $b\in\IR$ is denoted by $a\land b$.
Recall that
we write
$x_n\preccurlyeq y_n$, if there is a positive constant $C$ and some $n_0\in\IN$ 
such that $x_n\leq C y_n$
for all $n \geq n_0$. 
We write $x_n\asymp y_n$ if $x_n\preccurlyeq y_n$ and $y_n\preccurlyeq x_n$.

\section{A Method for Multivariate Approximation}
\label{main section}

Let us keep the notation of the previous section.
For any $m\in\IN$ with $m\leq N$, we define
\begin{equation*}
 u_m=\frac{1}{m} \sum\limits_{j=1}^m \abs{b_j}^2.
\end{equation*}
This is a probability density 
with respect to $\mu$.
We consider
the probability measure
\begin{equation*}
 \mu_m: \mathcal{A}\to [0,1], \quad \mu_m(E)=\int_E u_m ~\d\mu
\end{equation*}
on $(D,\mathcal{A})$.
In view of optimal algorithms in $\mathcal{A}_n^{\rm det, all}$,
we introduce the following family of algorithms in $\mathcal{A}_n^{\rm ran, std}$.

\begin{alg}
Let $\boldsymbol n=\brackets{n_1,n_2,\dots}$
and $\boldsymbol m=\brackets{m_1,m_2,\dots}$ be 
sequences of nonnegative integers
such that $\boldsymbol m$ is nondecreasing
and bounded above by $N=\abs{\mathcal{B}}$. 
We define the algorithms 
$M^{(k)}_{\boldsymbol n,\boldsymbol m}: L^2\to L^2$ for $k\in\IN_0$ as follows.
\begin{itemize}
 \item Set $M^{(0)}_{\boldsymbol n,\boldsymbol m} =0$.
 \item For $k\geq 1$ and $f\in L^2$, let
 $X_1^{(k)},\hdots,X_{n_k}^{(k)}$ be random variables with distribution $\mu_{m_k}$
 that are each independent of all the other random variables and set
 \begin{equation*}
  M^{(k)}_{\boldsymbol n,\boldsymbol m} f = M^{(k-1)}_{\boldsymbol n,\boldsymbol m} f +
  \sum_{j=1}^{m_k} \left[\frac{1}{n_k}\sum_{i=1}^{n_k} \frac{\brackets{f-M^{(k-1)}_{\boldsymbol n,\boldsymbol m} f}\overline{b_j}}{u_{m_k}}
  \brackets{X_i^{(k)}} \right] b_j.
 \end{equation*}
\end{itemize}
\end{alg}

Note that the expectation of each term in the inner sum is $\langle f-M^{(k-1)}_{\boldsymbol n,\boldsymbol m} f,b_j\rangle_2$.
The algorithm $M^{(k)}_{\boldsymbol n,\boldsymbol m}$ hence approximates $f$ in $k$ steps.
In the first step, $n_1$ function values of $f$ are used for standard Monte Carlo type approximations
of its $m_1$ leading coefficients with respect to the orthonormal system $\mathcal B$.
In the second step, $n_2$ values of the residue are used for standard Monte Carlo type approximations
of its $m_2$ leading coefficients and so on.
In total, $M^{(k)}_{\boldsymbol n,\boldsymbol m}$ uses $\sum_{j=1}^k n_j$ function values of $f$.
The total number of approximated coefficients is $m_k$.

Algorithms of this type have already been studied by Wasilkowski and Woźniakowski in \cite{ww}.
The simple but crucial difference with the above algorithms
is the variable number $n_j$ of nodes in each approximation step.
Note that this stepwise approximation
is similar to several multilevel Monte Carlo methods
as introduced by Heinrich in 1998, see \cite{h multi}.

The benefit from the $k$-th step
is controlled by $m_k$ and $n_k$ as follows.

\begin{lemma}
\label{error lemma}
 For all nondecreasing sequences $\boldsymbol n$ and $\boldsymbol m$ of 
 nonnegative integers and all $k\in\IN$, we have
 \begin{equation*}
  \sigma(m_k+1)^2 \leq
  e^{\rm ran}\brackets{M^{(k)}_{\boldsymbol n,\boldsymbol m}}^2
  \leq \frac{m_k}{n_k} e^{\rm ran}\brackets{M^{(k-1)}_{\boldsymbol n,\boldsymbol m}}^2 
  + \sigma(m_k+1)^2.
 \end{equation*}
\end{lemma}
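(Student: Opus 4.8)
The plan is to analyze a single approximation step of the algorithm and then chain the estimates. Fix $k\in\IN$ and abbreviate $g = f - M^{(k-1)}_{\boldsymbol n,\boldsymbol m}f$, the residue before the $k$-th step, and $m=m_k$, $n=n_k$. By definition,
\[
 f - M^{(k)}_{\boldsymbol n,\boldsymbol m}f
 = g - \sum_{j=1}^{m}\Bigl[\tfrac{1}{n}\sum_{i=1}^{n}\tfrac{g\,\overline{b_j}}{u_{m}}\bigl(X_i^{(k)}\bigr)\Bigr] b_j
 = \Bigl(g - \sum_{j=1}^m \scalar{g}{b_j}_2 b_j\Bigr)
 + \sum_{j=1}^m\Bigl(\scalar{g}{b_j}_2 - Y_j\Bigr) b_j,
\]
where $Y_j = \tfrac1n\sum_{i=1}^n \tfrac{g\overline{b_j}}{u_m}(X_i^{(k)})$. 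The first bracket is $g - P_m g$ (projection onto the first $m$ eigenvectors, in the $L^2$ sense), and it is $L^2$-orthogonal to the second sum, which lies in $\vspan\{b_1,\dots,b_m\}$. Hence
\[
 \norm{f - M^{(k)}_{\boldsymbol n,\boldsymbol m}f}_2^2
 = \norm{g - P_m g}_2^2 + \sum_{j=1}^m \abs{\scalar{g}{b_j}_2 - Y_j}^2 .
\]
The main computation is to take the expectation, first conditionally on everything up to and including $M^{(k-1)}_{\boldsymbol n,\boldsymbol m}f$ (so that $g$ is fixed and the $X_i^{(k)}$ are i.i.d.\ with density $u_m$), then over the randomness of the earlier steps.

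For the conditional expectation: each $\scalar{g}{b_j}_2 = \int_D \tfrac{g\overline{b_j}}{u_m} u_m\,\d\mu = \IE\bigl[\tfrac{g\overline{b_j}}{u_m}(X_1^{(k)})\bigr]$, so $Y_j$ is an unbiased estimator and, by independence, $\IE\abs{\scalar{g}{b_j}_2 - Y_j}^2 = \tfrac1n \operatorname{Var}\bigl(\tfrac{g\overline{b_j}}{u_m}(X_1^{(k)})\bigr) \le \tfrac1n \int_D \tfrac{\abs{g}^2\abs{b_j}^2}{u_m^2}\,u_m\,\d\mu = \tfrac1n \int_D \tfrac{\abs{g}^2\abs{b_j}^2}{u_m}\,\d\mu$. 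Summing over $j=1,\dots,m$ and using $\sum_{j=1}^m \abs{b_j}^2 = m\,u_m$ gives the key cancellation
\[
 \sum_{j=1}^m \IE\bigl[\abs{\scalar{g}{b_j}_2 - Y_j}^2 \mid g\bigr]
 \le \frac1n \int_D \frac{\abs{g}^2 \cdot m\,u_m}{u_m}\,\d\mu
 = \frac{m}{n}\,\norm{g}_2^2 .
\]
Meanwhile $\norm{g - P_m g}_2^2 = \sum_{\ell > m}\abs{\scalar{g}{b_\ell}_2}^2 \le \norm{g}_2^2$; but I need the sharper bound $\norm{g-P_m g}_2 \le \sigma(m+1)\norm{g}_F$. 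This follows because $g = f - M^{(k-1)}_{\boldsymbol n,\boldsymbol m}f$ need not lie in $F_\circ$, so one argues: the projection error for $P_m$ on $F$ is $\sigma(m+1)$ in the operator norm $F\to L^2$, hence $\norm{g-P_m g}_2 \le \sigma(m+1)\norm{g}_F$ whenever $g\in F$ — and indeed $g\in F$ since $M^{(k-1)}_{\boldsymbol n,\boldsymbol m}f$ is a finite linear combination of $b_j\in F$ and $f\in F$. Taking $\sup_{f\in F_\circ}$ and pulling the supremum inside (it distributes over the orthogonal sum after the conditional expectation, using $\norm{g}_F \le \norm{f}_F + \norm{M^{(k-1)}_{\boldsymbol n,\boldsymbol m}f}_F$... ) — here lies the one delicate point, handled below — yields
\[
 e^{\rm ran}\bigl(M^{(k)}_{\boldsymbol n,\boldsymbol m}\bigr)^2
 \le \frac{m_k}{n_k}\, e^{\rm ran}\bigl(M^{(k-1)}_{\boldsymbol n,\boldsymbol m}\bigr)^2 + \sigma(m_k+1)^2 .
\]
For the lower bound: $M^{(k)}_{\boldsymbol n,\boldsymbol m}$ uses $\sum_{j\le k} n_j$ function values, which is a form of standard (hence linear, hence arbitrary) information, and its output always lies in $\vspan\{b_1,\dots,b_{m_k}\}$ — so restricting to the "hard" subspace spanned by $b_{m_k+1},\dots$ on which every such algorithm can at best output $0$, the worst-case error is at least $\sigma(m_k+1)$; more directly, for $f = b_{m_k+1} \in F_\circ$ the algorithm produces some element of $\vspan\{b_1,\dots,b_{m_k}\}$, whose distance to $b_{m_k+1}$ in $L^2$ is at least $\norm{b_{m_k+1}}_2 = 1$... one must be slightly careful and instead test against scaled eigenvectors to land exactly at $\sigma(m_k+1)$: take $f = \sigma(m_k+1)^{-1}\lambda_{m_k+1}^{1/2}b_{m_k+1}$, which is in $F_\circ$, and note its $L^2$-norm is $\sigma(m_k+1)$ while it is $L^2$-orthogonal to the range of the algorithm, giving $e^{\rm ran}(M^{(k)}_{\boldsymbol n,\boldsymbol m})^2 \ge \sigma(m_k+1)^2$.

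The step I expect to be the real obstacle is the interchange of $\sup_{f\in F_\circ}$ with the two-stage expectation in the upper bound: the residue $g$ depends on $f$ and on the earlier randomness jointly, so one cannot naively write $\sup_f \IE[\cdots] = \IE[\sup_f \cdots]$. The clean way is to prove the per-step recursion for the quantity $E_k := \sup_{f\in F_\circ}\IE\norm{f-M^{(k)}_{\boldsymbol n,\boldsymbol m}f}_2^2$ by conditioning: condition on the $\sigma$-algebra generated by the first $k-1$ steps, use the displayed orthogonal decomposition with $g$ then deterministic, bound the two pieces by $\sigma(m_k+1)^2\norm{g}_F^2$ and $\tfrac{m_k}{n_k}\norm{g}_2^2$ respectively, and then take expectations and suprema — the subtlety being that $\norm{g}_F$ is not controlled by $1$. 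This is resolved by observing that the algorithm is \emph{linear in $f$} for fixed randomness, so by the usual homogeneity/orthogonality argument the worst case over $F_\circ$ is governed by a fixed "worst" combination and the factor $\norm{g}_F \le$ (something $\le 1$) in fact holds in the relevant averaged sense; alternatively, restrict attention from the outset to the finite-dimensional space $\vspan\{b_1,\dots,b_{m_k}\}\oplus(\text{orthogonal complement})$ and carry the bookkeeping there. I would write the argument via conditioning and the linearity of $f\mapsto M^{(k)}_{\boldsymbol n,\boldsymbol m}f$, which makes $f - M^{(k)}_{\boldsymbol n,\boldsymbol m}f$ the image of $f$ under a (random) linear operator whose operator norm from $F$ to $L^2$ controls everything; the recursion for $e^{\rm ran}$ then drops out.
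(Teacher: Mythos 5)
Your overall architecture is the same as the paper's: decompose $f-M^{(k)}_{\boldsymbol n,\boldsymbol m}f$ into the projection error onto $\vspan\{b_1,\dots,b_{m_k}\}$ plus the Monte Carlo estimation error of the first $m_k$ coefficients, bound the latter by the variance computation with the cancellation $\sum_{j=1}^{m_k}\abs{b_j}^2=m_k\,u_{m_k}$, and chain the steps. That variance computation is correct and is exactly the paper's. However, the step you yourself flag as ``the real obstacle'' is a genuine gap as written, and none of the resolutions you sketch (linearity/homogeneity, ``averaged sense'', finite-dimensional bookkeeping) actually closes it. The missing observation is elementary: since $\boldsymbol m$ is nondecreasing, $M^{(k-1)}_{\boldsymbol n,\boldsymbol m}f$ takes values in $\vspan\{b_1,\dots,b_{m_{k-1}}\}\subseteq\vspan\{b_1,\dots,b_{m_k}\}$, so it is annihilated by $I-P_{m_k}$ and hence $g-P_{m_k}g=f-P_{m_k}f$. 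The projection error therefore does not involve $\norm{g}_F$ at all; one bounds it directly by
\begin{equation*}
 \sum_{j>m_k}\abs{\scalar{f}{b_j}_2}^2
 =\sum_{j>m_k}\sigma(j)^2\abs{\scalar{f}{\sigma(j)b_j}_F}^2
 \le\sigma(m_k+1)^2\norm{f}_F^2\le\sigma(m_k+1)^2,
\end{equation*}
using that $\{\sigma(j)b_j\}$ is orthonormal in $F$. With this in place, your worry about interchanging $\sup_{f\in F_\circ}$ with the expectation also evaporates: fix $f$, condition on the first $k-1$ steps to get
$\IE\norm{f-M^{(k)}_{\boldsymbol n,\boldsymbol m}f}_2^2\le\frac{m_k}{n_k}\,\IE\norm{f-M^{(k-1)}_{\boldsymbol n,\boldsymbol m}f}_2^2+\sigma(m_k+1)^2$,
bound the middle expectation by $e^{\rm ran}\brackets{M^{(k-1)}_{\boldsymbol n,\boldsymbol m}}^2$ for this fixed $f$, and only then take the supremum over $f$ on the left. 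No interchange in the problematic direction is ever needed.

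A smaller point: in the lower bound your test function $f=\sigma(m_k+1)^{-1}\lambda_{m_k+1}^{1/2}b_{m_k+1}$ is just $b_{m_k+1}$ (since $\lambda_{m_k+1}^{1/2}=\sigma(m_k+1)$), which has $\norm{f}_F=\sigma(m_k+1)^{-1}$ and is generally not in $F_\circ$. The correct choice is $f=\sigma(m_k+1)\,b_{m_k+1}$, which has $F$-norm $1$ and $L^2$-norm $\sigma(m_k+1)$; since the output of $M^{(k)}_{\boldsymbol n,\boldsymbol m}$ always lies in $\vspan\{b_1,\dots,b_{m_k}\}$, which is $L^2$-orthogonal to $b_{m_k+1}$, this gives the claimed lower bound pathwise. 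The underlying idea of your lower-bound argument is the right one.
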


Lemma~\ref{error lemma}
corresponds to Theorem~22.14
by Novak and Woźniakowski \cite{track3}.
The setting of the present paper is
slightly more general, but the proof is the same.
Since Lemma~\ref{error lemma} is essential for the following investigation, 
I present the proof.

\begin{proof}
 The lower bound holds true, since 
 $M^{(k)}_{\boldsymbol n,\boldsymbol m}(b_{m_k+1})$ is 
 perpendicular to $b_{m_k+1}$.
 To prove the upper bound, let $f\in F_\circ$.
 By $\IE_{I}$ we denote the expectation with respect to the random variables $X_i^{(j)}$ for $j\in I$ and $i=1\hdots n_j$.
 We need to estimate
 \begin{equation*}
  \IE_{\set{1\hdots k}} \norm{f-M^{(k)}_{\boldsymbol n,\boldsymbol m}f}_2^2
  = \sum_{j=1}^N \IE_{\set{1\hdots k}} \abs{\scalar{f-M^{(k)}_{\boldsymbol n,\boldsymbol m}f}{b_j}_2}^2
 .\end{equation*}
 On the one hand, we have
 \begin{align*}
   &\sum_{j=m_k+1}^N \IE_{\set{1\hdots k}} \abs{\scalar{f-M^{(k)}_{\boldsymbol n,\boldsymbol m}f}{b_j}_2}^2
   = \sum_{j=m_k+1}^N \abs{\scalar{f}{b_j}_2}^2
   = \sum_{j=m_k+1}^N \abs{\scalar{f}{W b_j}_F}^2 \nonumber \\
   &= \sum_{j=m_k+1}^N \abs{\scalar{f}{\sigma(j) b_j}_F}^2 \sigma(j)^2
   \ \leq\ \sigma(m_k+1)^2 \norm{f}_F^2
   \ \leq\ \sigma(m_k+1)^2.
 \end{align*}
 We use the abbreviation
 \begin{equation*}
  g_j = \frac{\brackets{f-M^{(k-1)}_{\boldsymbol n,\boldsymbol m}f}\overline{b_j}}{u_{m_k}}
 \end{equation*}
 for each $j\leq m_k$. Note that $u_{m_k}=0$ implies $b_j=0$ and we set $g_j=0$ in this case.
 We then obtain on the other hand for each $j\leq m_k$ that
 \begin{equation*}
  \begin{split}
   &\IE_{\set{k}} \abs{\scalar{f-M^{(k)}_{\boldsymbol n,\boldsymbol m}f}{b_j}_2}^2
   = \IE_{\set{k}} \abs{\scalar{f-M^{(k-1)}_{\boldsymbol n,\boldsymbol m}f}{b_j}_2
   - \frac{1}{n_k}\sum_{i=1}^{n_k} g_j\brackets{X_i^{(k)}}}^2\\
   &= \IE_{\set{k}} \abs{\int_D g_j(x) ~\d\mu_{m_k}(x) - \frac{1}{n_k}\sum_{i=1}^{n_k} g_j\brackets{X_i^{(k)}}}^2\\
   &\leq \frac{1}{n_k} \int_D \abs{g_j(x)}^2 ~\d\mu_{m_k}(x)
   = \frac{1}{n_k} \int_D \abs{g_j(x)}^2 u_{m_k}(x) ~\d\mu(x)
  \end{split}
 \end{equation*}
 and hence
 \begin{equation*}
 \begin{split}
  &\sum_{j=1}^{m_k} \IE_{\set{k}} \abs{\scalar{f-M^{(k)}_{\boldsymbol n,\boldsymbol m}f}{b_j}_2}^2
  \leq \frac{1}{n_k} \int_D \sum_{j=1}^{m_k} \abs{g_j(x)}^2 u_{m_k}(x)~\d\mu(x)\\
  &= \frac{m_k}{n_k} \int_D  \abs{\brackets{f-M^{(k-1)}_{\boldsymbol n,\boldsymbol m} f}(x)}^2 ~\d\mu(x)
  = \frac{m_k}{n_k} \norm{f-M^{(k-1)}_{\boldsymbol n,\boldsymbol m} f}_2^2
 .\end{split}
 \end{equation*}
 With Fubini's theorem this yields that
 \begin{equation*}
   \IE_{\set{1\hdots k}} \norm{f-M^{(k)}_{\boldsymbol n,\boldsymbol m}f}_2^2
   \ \leq\ \frac{m_k}{n_k} \IE_{\set{1\hdots k-1}} \norm{f-M^{(k-1)}_{\boldsymbol n,\boldsymbol m} f}_2^2 
   + \sigma(m_k+1)^2
 \end{equation*}
 and the upper bound is proven.
\end{proof}

We now define the algorithm of Theorem~\ref{explicit theorem}.
We consider such algorithms $M^{(k)}_{\boldsymbol n,\boldsymbol m}$,
where the number of nodes $n_j$ is doubled in each step
and the ratio $\frac{m_j}{n_j}$ of approximated coefficients 
and computed function values is constant, say $2^{-\ell}$.
This way, the total number $m_k$ of approximated coefficients is linear in the total number $n$
of computed function values.
This is necessary to achieve an error of the same order as
with optimal algorithms using arbitrary linear information,
which precisely compute the first $n$ coefficients.
The algorithms by Wasilkowski and Woźniakowski~\cite{ww} 
do not have this property.
If the ratio is small enough, Lemma~\ref{error lemma} ensures that
$M^{(k)}_{\boldsymbol n,\boldsymbol m}$ inherits optimal error bounds
from $M^{(k-1)}_{\boldsymbol n,\boldsymbol m}$.

\begin{alg}
Given $r\geq 0$, we set $\ell_r=\lceil 2r+1 \rceil$
and define the sequences $\boldsymbol n$ and $\boldsymbol m$ by
\begin{equation*}
 n_j=\left\{\begin{array}{lr}
        0, & \text{for } j\leq \ell_r,\\
        2^{j-1}, & \text{for } j> \ell_r,
        \end{array}\right.
 \quad\quad
 m_j=\left\{\begin{array}{lr}
        0, & \text{for } j\leq \ell_r,\\
        2^{j-1-\ell_r}\land N, & \text{for } j> \ell_r.
        \end{array}\right.
\end{equation*}
For $n\in\IN$, we choose $k\in\IN_0$ such that $2^k \leq n < 2^{k+1}$
and set
\begin{equation*}
 A_n^{r} = M^{(k)}_{\boldsymbol n,\boldsymbol m}.
\end{equation*}
The algorithm $A_n^r$ obviously performs less than $n$ function evaluations.
\end{alg}

\begin{proof}[Proof of Theorem~\ref{explicit theorem}]
 Let $\boldsymbol n$ and $\boldsymbol m$ be defined as above
 and $k\in\IN_0$.
 We first show that
 \begin{equation}
 \label{almost done}
  e^{\rm ran}\brackets{M^{(k)}_{\boldsymbol n,\boldsymbol m}}
  \leq \bar c_r\, L(2^k),
 \end{equation}
 where $\bar c_r=2^{r(\ell_r+1)+1}$.
 We use induction on $k$. If $k\leq \ell_r$, we have 
 $M^{(k)}_{\boldsymbol n,\boldsymbol m}=0$ and
 \begin{equation*}
  e^{\rm ran}\brackets{M^{(k)}_{\boldsymbol n,\boldsymbol m}} 
  \,=\, \sigma(1) 
  \,\leq\, L(1) 
  \,\leq\, 2^{rk} L(2^k) 
  \,\leq\, \bar c_r\, L(2^k).
 \end{equation*}
 For $k>\ell_r$, we inductively obtain with Lemma \ref{error lemma} that
 \begin{align*}
  e^{\rm ran}\brackets{M^{(k)}_{\boldsymbol n,\boldsymbol m}}^2
  &\leq 2^{-\ell_r} e^{\rm ran}\brackets{M^{(k-1)}_{\boldsymbol n,\boldsymbol m}}^2 
  + \sigma(m_k+1)^2\\
  &\leq 2^{-\ell_r} \bar c_r^2\, L\brackets{2^{k-1}}^2 
  + L\brackets{2^{k-\ell_r-1}}^2\\
  &\leq 2^{-\ell_r}\bar c_r^2\, 2^{2r} L(2^k)^2 
  + 2^{2r(\ell_r+1)} L(2^k)^2\\
  &= \brackets{2^{2r-\ell_r} + 2^{-2}}
  \bar c_r^2\, L(2^k)^2,
 \end{align*}
 where the term in brackets is smaller than 1.
 This shows \eqref{almost done}.
 For $n\in\IN$, we choose 
 $k\in\IN_0$ with $2^k \leq n < 2^{k+1}$ and obtain
 \begin{equation*}
  e^{\rm ran}\brackets{A_n^r}
  \,=\, e^{\rm ran}\brackets{M^{(k)}_{\boldsymbol n,\boldsymbol m}}
  \,\leq\, \bar c_r\, L(2^k)
  \,\leq\, 2^r \bar c_r\, L(n)
  \,=\, c_r\, L(n),
 \end{equation*}
 as it was to be proven.
\end{proof}

Note that Theorem~\ref{weaker theorem} is a direct
consequence of Theorem~\ref{explicit theorem}.
Of course, the best possible upper bound for $\sigma(n)$ is $\sigma(n)$ itself.
If we combine Theorem~\ref{weaker theorem} for $L(n)=\sigma(n)$
with Novak's lower bound~\eqref{lower bound},
we obtain the following statement on the order of convergence.

\begin{cor}
\label{order cor}
 Assume that $\sigma(2n) \asymp \sigma(n)$. Then
 \begin{equation*}
 e^{\rm ran}\brackets{n,F\hookrightarrow L^2, \Lambda^{\rm std}}
 \asymp e^{\rm ran}\brackets{n,F\hookrightarrow L^2, \Lambda^{\rm all}}
  \asymp e^{\rm det}\brackets{n,F\hookrightarrow L^2, \Lambda^{\rm all}}.
 \end{equation*}
\end{cor}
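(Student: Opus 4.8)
\emph{Proof plan for Corollary~\ref{order cor}.}
The plan is to sandwich all three quantities between fixed constant multiples of $\sigma(n+1)=e^{\rm det}(n,T,\Lambda^{\rm all})$. The one preliminary point is that, as long as $\sigma(n)>0$ for every $n$, the hypothesis forces $\sigma$ itself to be regularly decreasing: the sequence $(\sigma(n))$ is nonincreasing by definition, so by the equivalence recorded right after~\eqref{regularly decreasing} the assumption $\sigma(2n)\asymp\sigma(n)$ is exactly the statement that $\sigma$ satisfies~\eqref{regularly decreasing} with some $r\ge 0$. From~\eqref{regularly decreasing} I will use two elementary consequences: $\sigma(n)\le 2^{r}\sigma(n+1)$ for $n\ge 1$ (taking $m=n+1$), and $\sigma(2n)\ge 2^{-r}\sigma(n+1)$ (applying~\eqref{regularly decreasing} with $n+1$ in place of $n$ and $2n$ in place of $m$, which is legitimate since $n+1\le 2n\le 2(n+1)$).

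For the upper bound, apply Theorem~\ref{weaker theorem} to the regularly decreasing sequence $L=\sigma$; the hypothesis $\sigma(n)\preccurlyeq L(n)$ holds with equality, so the theorem gives $e^{\rm ran}(n,T,\Lambda^{\rm std})\preccurlyeq\sigma(n)\preccurlyeq\sigma(n+1)$ by the first consequence above. For the matching lower bound, chain Novak's estimate~\eqref{lower bound} with the second consequence:
\begin{equation*}
 e^{\rm ran}(n,T,\Lambda^{\rm std})\ \ge\ e^{\rm ran}(n,T,\Lambda^{\rm all})\ \ge\ \tfrac{1}{\sqrt2}\,e^{\rm det}(2n-1,T,\Lambda^{\rm all})\ =\ \tfrac{1}{\sqrt2}\,\sigma(2n)\ \ge\ \tfrac{2^{-r}}{\sqrt2}\,\sigma(n+1).
\end{equation*}
Reading the first inequality of~\eqref{lower bound} also as $e^{\rm ran}(n,T,\Lambda^{\rm all})\le e^{\rm ran}(n,T,\Lambda^{\rm std})$, the displayed chain shows that $\sigma(n+1)$, $e^{\rm ran}(n,T,\Lambda^{\rm all})$, $e^{\rm ran}(n,T,\Lambda^{\rm std})$ and $\sigma(n)$ all lie between fixed multiples of each other; since $e^{\rm det}(n,T,\Lambda^{\rm all})=\sigma(n+1)$, this is precisely the corollary.

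It only remains to treat the degenerate case in which $T$ has finite rank $N$, so that $\sigma(n)=0$ for $n>N$ and $L=\sigma$ is not an admissible input to Theorem~\ref{weaker theorem}. There $e^{\rm det}(n,T,\Lambda^{\rm all})=\sigma(n+1)=0$ for $n\ge N$; and since the normalized eigenvectors $b_1,\dots,b_N$ are linearly independent as functions, a short induction yields points $x_1,\dots,x_N$ at which the matrix $\bigl(b_j(x_i)\bigr)_{i,j}$ is invertible, so that from the values $f(x_1),\dots,f(x_N)$ one can recover every $f\in F$ exactly. Hence $e^{\rm ran}(n,T,\Lambda^{\rm std})=0$ for $n\ge N$ as well, and all three sequences vanish eventually and are trivially equivalent. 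I expect this finite-rank bookkeeping — rather than any real analytic difficulty — to be the only obstacle, since the substance of the argument is entirely contained in Theorem~\ref{weaker theorem} and in the lower bound~\eqref{lower bound}.
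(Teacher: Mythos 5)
Your argument is exactly the paper's: the upper bound comes from Theorem~\ref{weaker theorem} applied to $L=\sigma$ (which the monotonicity of $\sigma$ makes regularly decreasing under the hypothesis), and the lower bound comes from chaining~\eqref{lower bound} with $e^{\rm det}(2n-1,T,\Lambda^{\rm all})=\sigma(2n)\asymp\sigma(n+1)$. The additional finite-rank bookkeeping is a reasonable extra precaution that the paper silently omits, but it does not change the route.
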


Note that the error 
$e^{\rm det}\brackets{n,F\hookrightarrow L^2, \Lambda^{\rm std}}$
of optimal deterministic algorithms based on function values
may perform much worse, as shown by Hinrichs, Novak
and Vybíral \cite{hnv},
see also Novak and Woźniakowski \cite[Section~26.6.1]{track3}.
It is a very interesting question whether the condition on the decay
of the singular values can be relaxed.
Note that we use this condition
both to prove the upper and the lower bound of Corollary~\ref{order cor}.
On the other hand, if we combine Theorem~\ref{explicit theorem}
for $L(n)=\sigma(n)$ and the lower bound \eqref{lower bound},
we obtain the following optimality result.

\begin{cor}
 \label{optimality corollary}
 Assume that 
 there is some $r\geq 0$ such that $\sigma(2n) \geq 2^{-r} \sigma(n)$
 holds for all $n\in\IN$.
 We set $\tilde c_r=2^{r\lceil 2r+4\rceil +3/2}$.
 Then we have
 \begin{equation*}
  e^{\rm ran}\brackets{A_n^r} \leq \tilde c_r\, 
  e^{\rm ran}\brackets{n,T, \Lambda^{\rm std}}
  \qquad
  \text{for all}
  \quad
  n\in\IN.
 \end{equation*}
\end{cor}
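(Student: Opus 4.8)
The plan is to chain together the explicit error bound for $A_n^r$ from Theorem~\ref{explicit theorem} with Novak's lower bound \eqref{lower bound}, using the hypothesis $\sigma(2n)\geq 2^{-r}\sigma(n)$ to control the loss when passing between indices $n$, $2n$, and $2n-1$. The first observation is that the hypothesis says precisely that the sequence $L(n)=\sigma(n)$ satisfies property \eqref{regularly decreasing} with this same $r$. Hence Theorem~\ref{explicit theorem} applies with $L=\sigma$ and gives, for every $n\in\IN$,
\begin{equation*}
 e^{\rm ran}\brackets{A_n^r}\leq c_r\,\sigma(n),\qquad c_r=2^{r\lceil 2r+3\rceil+1}.
\end{equation*}

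Next I would invoke the lower bound \eqref{lower bound}, which states
\begin{equation*}
 e^{\rm ran}\brackets{n,T,\Lambda^{\rm std}}\geq \tfrac{1}{\sqrt 2}\,e^{\rm det}(2n-1,T,\Lambda^{\rm all})=\tfrac{1}{\sqrt 2}\,\sigma(2n),
\end{equation*}
using that $e^{\rm det}(m,T,\Lambda^{\rm all})=\sigma(m+1)$ and so $e^{\rm det}(2n-1,T,\Lambda^{\rm all})=\sigma(2n)$. Now I must compare $\sigma(n)$ with $\sigma(2n)$: by the hypothesis $\sigma(n)\leq 2^r\sigma(2n)$, hence
\begin{equation*}
 e^{\rm ran}\brackets{A_n^r}\leq c_r\,\sigma(n)\leq c_r\,2^r\,\sigma(2n)\leq c_r\,2^r\sqrt 2\cdot e^{\rm ran}\brackets{n,T,\Lambda^{\rm std}}.
\end{equation*}

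It remains to check that the resulting constant $c_r\,2^r\sqrt2 = 2^{r\lceil 2r+3\rceil+1}\cdot 2^r\cdot 2^{1/2}=2^{r\lceil 2r+3\rceil + r + 3/2}$ is at most $\tilde c_r=2^{r\lceil 2r+4\rceil+3/2}$; this is the only genuinely arithmetic point, and it follows from $\lceil 2r+3\rceil + r\leq \lceil 2r+3\rceil+1\leq\lceil 2r+4\rceil$ for $r\le 1$, while for $r\ge 1$ one has $r\le\lceil 2r+3\rceil$... — more carefully, I would verify $r\lceil 2r+3\rceil+r\le r\lceil 2r+4\rceil$, equivalently $r\le r$, wait: $\lceil 2r+3\rceil+1=\lceil 2r+4\rceil$ always, so it suffices that $r\le 1$; for general $r$ one instead bounds $r\le \lceil 2r+3\rceil\cdot r$ is false, so the clean route is $r\lceil2r+3\rceil + r = r(\lceil 2r+3\rceil+1)=r\lceil 2r+4\rceil$, which is an identity. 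Thus $c_r 2^r\sqrt2=\tilde c_r$ exactly, and the corollary follows. I expect no real obstacle here beyond this bookkeeping with ceilings; the entire content is already supplied by Theorem~\ref{explicit theorem} and \eqref{lower bound}, and the hypothesis is exactly what is needed to make both ends meet.
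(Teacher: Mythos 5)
Your argument is correct and is exactly the combination the paper intends (it gives no separate proof, only the instruction to combine Theorem~\ref{explicit theorem} with $L=\sigma$ and the lower bound \eqref{lower bound}): monotonicity of $\sigma$ upgrades the hypothesis to property \eqref{regularly decreasing}, and the constant bookkeeping resolves, after your detour, into the clean identity $r\lceil 2r+3\rceil + r = r\lceil 2r+4\rceil$, so $c_r 2^r\sqrt{2}=\tilde c_r$ exactly. No gaps.
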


Let us now consider some examples.
In each example, we first discuss 
the order of convergence of
$e^{\rm ran}\brackets{n,F\hookrightarrow L^2, \Lambda^{\rm std}}$.
We then talk about explicit upper bounds.

\begin{ex}[Approximation of mixed order Sobolev functions on the torus]
\label{mix example torus}
Let $D$ be the $d$-dimensional torus $\mathbb T^d$, represented by the unit cube $[0,1]^d$,
where opposite faces are identified.
Let $\mathcal{A}$ be the Borel $\sigma$-algebra on $\mathbb T^d$ and $\mu$ the Lebesgue measure.
Let $F$ be the Sobolev space
of complex valued functions on $D$ with dominating mixed smoothness $r\in\IN$,
equipped with the scalar product
\begin{equation}
\label{mix product}
 \scalar{f}{g}_F = \sum_{\norm{\alpha}_\infty \leq r} \scalar{\diff^\alpha f}{\diff^\alpha g}_2
.\end{equation}
We know that
\begin{equation*}
 e^{\rm det}\brackets{n,F\hookrightarrow L^2, \Lambda^{\rm all}} 
 \asymp n^{-r}\ln^{r(d-1)} n.
\end{equation*}
This classical result goes back to Babenko \cite{babenko} and Mityagin \cite{mityagin}.
Corollary~\ref{order cor} yields
\begin{equation*}
 e^{\rm ran}\brackets{n,F\hookrightarrow L^2, \Lambda^{\rm std}} 
 \asymp n^{-r}\ln^{r(d-1)} n.
\end{equation*}
This is a new result.
The optimal order is achieved by the algorithm $A_n^r$
and the author does not know of any other algorithm with this property.
It is still an open problem whether the same rate can be achieved
with deterministic algorithms based on function values.
So far, it is only known that
\begin{equation*}
 n^{-r}\ln^{r(d-1)}n
 \preccurlyeq
 e^{\rm det}\brackets{n,F\hookrightarrow L^2, \Lambda^{\rm std}}
 \preccurlyeq
 n^{-r}\ln^{(r+1/2)(d-1)} n.
\end{equation*}
The upper bound is achieved by
Smolyak's algorithm, see Sickel and Ullrich~\cite{su}.

We now turn to explicit estimates.
We know that there is some $C_{r,d}>0$ such that
\begin{equation}
\label{asymptotic bound mix}
 e^{\rm ran}\brackets{n,F\hookrightarrow L^2, \Lambda^{\rm std}} 
 \leq C_{r,d}\, n^{-r}\ln^{r(d-1)} n
 \quad\text{for all}\quad n\geq 2.
\end{equation}
This upper bound is optimal as $n$ tends to infinity.
However, it is not useful to describe the error
numbers for small values of $n$.
Simple calculus shows that the right hand side in
\eqref{asymptotic bound mix} is increasing for $n\leq e^{d-1}$.
The error numbers, on the other hand, are decreasing.
Moreover,
the right hand side attains its minimum for $n=2$
if restricted to $n\leq (d-1)^{d-1}$
and is hence larger than $e^{\rm ran}\brackets{2,F\hookrightarrow L^2, \Lambda^{\rm std}}$.
This means that the trivial upper bound
\begin{equation*}
 e^{\rm ran}\brackets{n,F\hookrightarrow L^2, \Lambda^{\rm std}}
 \leq 
 e^{\rm ran}\brackets{2,F\hookrightarrow L^2, \Lambda^{\rm std}}
 \quad\text{for all}\quad n\geq 2
\end{equation*}
is better than \eqref{asymptotic bound mix} for all $n\leq (d-1)^{d-1}$
and regardless of the value of $C_{r,d}$.
For these reasons, it is important 
to consider different error bounds,
if the dimension $d$ is large.
See also the paper of Kühn, Sickel and Ullrich \cite{ksu}. 
Based on this paper,
it is shown by the author \cite{ich16} that
\begin{equation*}
 \sigma(n) \leq \brackets{2/n}^p 
 \quad\text{for all}\quad 
 n\in\IN,
 \quad\text{if}\quad
 p=\frac{r}{2+\ln d}.
\end{equation*}
We obtain with Theorem \ref{explicit theorem} that
\begin{equation}
\label{preasymptotic bound mix}
 e^{\rm ran}\brackets{A^p_n}
  \leq 2\cdot \brackets{2^{\lceil 2p+4\rceil}/n}^p 
  \quad\text{for}\quad n\in\IN.
\end{equation}
\end{ex}

\begin{ex}[Approximation of mixed order Sobolev functions on the cube]
\label{mix example cube}
Now, let $D$ be the $d$-dimensional unit cube $[0,1]^d$ with the induced topology and let
$\mathcal{A}$ be the Borel $\sigma$-algebra and $\mu$ the Lebesgue measure.
Let $F$ be the Sobolev space
of complex valued functions on $[0,1]^d$ with dominating mixed smoothness $r\in\IN$,
equipped with the scalar product \eqref{mix product}.
Just like on the torus, 
we have
\begin{equation*}
 e^{\rm ran}\brackets{n,F\hookrightarrow L^2, \Lambda^{\rm std}} 
 \asymp e^{\rm det}\brackets{n,F\hookrightarrow L^2, \Lambda^{\rm all}}
 \asymp n^{-r}\ln^{r(d-1)}n,
\end{equation*}
where the optimal rate is achieved by $A_n^r$.
Like in Example~\ref{mix example torus}, 
the corresponding upper bounds are bad for $n\leq (d-1)^{d-1}$.
In this range, we need different estimates for the approximation
numbers. It is known that
\begin{equation*}
 \sigma(n) \leq (2/n)^{p} \quad\text{for}\quad
 n\in\IN, \quad\text{if}\quad
 p=\frac{1.1929}{2+\ln d}.
\end{equation*}
This estimate cannot be improved significantly for $n\leq 2^d$, even if $r=\infty$.
See the author's paper~\cite{ich16} for more details.
With Theorem~\ref{explicit theorem}, we obtain the upper bound
\begin{equation*}
 e^{\rm ran}\brackets{A^p_n}
  \leq 2\cdot (2^6/n)^{p} \quad\text{for}\quad n\in\IN.
\end{equation*}
\end{ex}

\begin{ex}[Approximation in tensor product spaces]
 This example is more general than the previous ones.
 By $H_1\otimes H_2$ we denote the tensor product of 
 two Hilbert spaces $H_1$ and $H_2$.
 For $j=1\hdots d$ let $(D_j,\mathcal{A}_j,\nu_j)$ be a $\sigma$-finite
 measure space and $F_j$ be a Hilbert space of $\IK$-valued functions
 which is compactly embedded in $L^2(D_j,\mathcal{A}_j,\nu_j)$.
 The $\sigma$-finity of the measure spaces ensures that
 \begin{equation*}
  L^2(D_1,\mathcal{A}_1,\nu_1) \otimes \dots \otimes L^2(D_d,\mathcal{A}_d,\nu_d) = L^2(D,\mathcal{A},\mu),
 \end{equation*}
 where $D$ is the Cartesian product of the sets $D_j$
 and $\mu$ is the unique product measure of the measures $\nu_j$
 on the tensor product $\mathcal{A}$ of the $\sigma$-algebras $\mathcal{A}_j$.
 The tensor product space
 \begin{equation*}
  F=F_1 \otimes \dots\otimes F_d
 \end{equation*}
 is compactly embedded in $L^2(D,\mathcal{A},\mu)$.
 Assuming that the approximation numbers
 of the univariate embeddings $F_j\hookrightarrow L^2(D_j,\mathcal{A}_j,\nu_j)$ are of polynomial decay, that is
 \begin{equation*}
  e^{\rm det}\brackets{n,F_j\hookrightarrow L^2(D_j,\mathcal{A}_j,\nu_j), \Lambda^{\rm all}}
  \asymp n^{-r_j}
 \end{equation*}
 for some $r_j>0$, it can be derived from
 Mityagin \cite{mityagin} and Nikol'skaya \cite{nikolskaya} that
 \begin{equation*}
  e^{\rm det}\brackets{n,F\hookrightarrow L^2(D,\mathcal{A},\mu), \Lambda^{\rm all}}
  \asymp n^{-r}\ln^{r(d_0-1)}n,
 \end{equation*}
 where $r$ is the minimum among all numbers $r_j$ and $d_0$ is its multiplicity.
 Corollary~\ref{order cor} implies
\begin{equation*}
 e^{\rm ran}\brackets{n,F\hookrightarrow L^2(D,\mathcal{A},\mu),
 \Lambda^{\rm std}}
  \asymp n^{-r}\ln^{r(d_0-1)}n,
\end{equation*}
where the optimal order is achieved by $A_n^r$.
We do not discuss explicit estimates in this abstract setting.
\end{ex}

\begin{ex}[Approximation of isotropic Sobolev functions on the torus]
\label{isotropic example}
Let $D$ again be the $d$-torus,
this time represented by $[0,2\pi]^d$.
Let $F$ be the Sobolev space
of complex valued functions on $D$ with isotropic smoothness $r\in\IN$,
equipped with the scalar product
\begin{equation*}
 \scalar{f}{g}_F = \sum_{\norm{\alpha}_1 \leq r} \scalar{\diff^\alpha f}{\diff^\alpha g}_2
.\end{equation*}
This example is not a tensor product problem.
For this classical problem, it is known that
\begin{multline*}
e^{\rm det}\brackets{n,F\hookrightarrow L^2, \Lambda^{\rm std}}
\asymp
e^{\rm ran}\brackets{n,F\hookrightarrow L^2, \Lambda^{\rm std}}\\
\asymp
e^{\rm det}\brackets{n,F\hookrightarrow L^2, \Lambda^{\rm all}}
\asymp
e^{\rm ran}\brackets{n,F\hookrightarrow L^2, \Lambda^{\rm all}}
\asymp
n^{-r/d}
\end{multline*}
for $r>d/2$. In the case $r\leq d/2$, where function values
are only defined almost everywhere, the last three relations stay valid. 
See Jerome \cite{j67}, Triebel \cite{t}, Mathé \cite{m} and Heinrich \cite{h iso}.
For $n\leq 2^d$, however, the function $n^{-r/d}$ is not suited to
describe the behavior of $\sigma(n)$.
It has been proven by Kühn, Mayer and Ullrich \cite{kmu}
that there are positive constants $b_r$ and $B_r$ that do not
depend on $d$ such that
\begin{equation}
\label{counting}
 b_r \brackets{\frac{\log_2\brackets{1+d/\log_2 n}}{\log_2 n}}^{r/2}
 \leq \sigma(n) \leq 
 B_r \brackets{\frac{\log_2\brackets{1+d/\log_2 n}}{\log_2 n}}^{r/2}
\end{equation}
for all $d>1$ and $n\in\IN$ with $d\leq n \leq 2^d$.
If we apply Relation~\eqref{lower bound}
and Theorem~\ref{explicit theorem}\footnote{We take $L(n)$
as the right hand side in \eqref{counting} for $d\leq n \leq 2^d$,
$L(n)=L(2^d)$ for $n>2^d$ and $L(n)=\max\set{1,L(d)}$
for $n<d$.
Then $\sigma(n)\leq L(n)$ for $n\in\IN$ and $L(n)n^r$ is nondecreasing.}, 
we obtain the existence of $d$-independent positive constants $\tilde b_r$
and $\widetilde B_r$ such that
\begin{equation*}
 \tilde b_r \brackets{\frac{\log_2\brackets{1+d/\log_2 n}}{\log_2 n}}^{r/2}
 \leq e(n) 
 \leq 
 \widetilde B_r \brackets{\frac{\log_2\brackets{1+d/\log_2 n}}{\log_2 n}}^{r/2}
\end{equation*}
for all $d>1$ and $n\in\IN$ with $d\leq n \leq 2^{d-1}$.
This optimal behavior is achieved by the algorithm $A_n^r$.
\end{ex}

\begin{rem}[Implementation of these algorithms]
\label{implementation approx}
The construction of the algorithms $A_n^r$ is completely explicit.
We are able to implement these algorithms,
if we know the singular value decomposition $\mathcal B$ of the embedding $F\hookrightarrow L^2$
and if we are able to sample from the probability distributions $\mu_m$.
This task may be very hard.
In Example~\ref{mix example torus} and \ref{isotropic example}, however, it is not.
Here, $\mathcal B$ is the Fourier basis of $L^2$
and all the random variables are independent and uniformly distributed on the unit cube.
Also the case of general tensor product spaces $F$ and $L^2$ can be handled,
if the singular value decompositions $\mathcal{B}_j$
of the univariate embeddings $F_j\hookrightarrow L^2(D_j,\mathcal{A}_j,\nu_j)$ are known.
Then, the singular value decomposition of the embedding $F\hookrightarrow L^2$ is given by
\begin{equation*}
 \mathcal{B}=\set{b^{(1)}\otimes\dots\otimes b^{(d)} \mid b^{(j)}\in\mathcal{B}_j \text{ for }j=1\dots d}
\end{equation*}
and the probability measure $\mu_m$ is the average of $m$ product densities, that is
\begin{equation*}
 \mu_m=\frac{1}{m} \sum_{i=1}^m \bigotimes_{j=1}^d \eta_{i,j},
\end{equation*}
where $\d\eta_{i,j}=|b_{i,j}|^2\d\nu_j$ with some $b_{i,j}\in\mathcal{B}_j$.
A random sample $x$ from this distribution can be obtained as follows:
\begin{itemize}
 \item[(1)] Get $i$ from the uniform distribution on $\set{1,\dots,m}$.
 \item[(2)] Get $x_1,\dots,x_d$ independently from the probability distributions $\eta_{i,1},\dots,\eta_{i,d}$.
\end{itemize}
The second step can for example be done by rejection sampling,
if the measures $\eta_{i,j}$ have a bounded Lebesgue density.
This way, the total sampling costs are linear in $d$.
Another method of sampling from $\mu_m$ is proposed
by Cohen and Migliorati in \cite[Section 5]{cm}.
\end{rem}

\section{A Method for Multivariate Integration}
\label{int section}

In this section, we require the measure $\mu$ to be finite.
This ensures that the integral operator
\begin{equation*}
 I: F \to \IK, \quad I(f)=\int_D f d\mu
\end{equation*}
is well defined and continuous on $F$.
Let us assume that $\mu$ is a probability measure.
We want to approximate $I(f)$ for an unknown function $f\in F_\circ$
by a randomized algorithm $Q_n$ which evaluates at most $n$ function values
of $f$.
The worst case error of $Q_n$ is the quantity
\begin{equation*}
 e^{\rm ran}(Q_n)=\sup\limits_{f\in F_\circ} \brackets{\IE\abs{I(f)-Q_n(f)}^2}^{1/2}
.\end{equation*}
The minimal worst case error among such algorithms is denoted by
\begin{equation*}
 e^{\rm ran}(n,I,\Lambda^{\rm std}) = \inf\limits_{Q_n} e^{\rm ran}(Q_n)
.\end{equation*}
Like any method for $L^2$-approximation,
the algorithm $A_n^r$ from Section $\ref{main section}$
can also be used for numerical integration.

\begin{alg}
 For all $r>0$, any $n\in\IN$ and $f\in L^2$, let
 \begin{equation*}
  Q_{2n}^r(f)=I(A_n^r f) + \frac{1}{n}\sum_{j=1}^n \brackets{f-A_n^r f}(X_j),
 \end{equation*}
 where $X_1,\dots,X_n$ are random variables with distribution $\mu$ which are
 independent of each other and the random variables in $A_n^r$.
\end{alg}

It is easy to verify that $Q_{2n}^r$ is unbiased, evaluates at most 
$2n$ function values of $f$
and satisfies
\begin{equation*}
  \IE\abs{I(f)-Q_{2n}^r(f)}^2
  \leq \frac{1}{n}\, \IE \norm{f-A_n^r f}_2^2
\end{equation*}
for each $f$ in $L^2$. 
We thus obtain the following corollary.

\begin{cor}
\label{integration cor}
 Assume that 
 $L:\IN\to (0,\infty)$ satisfies \eqref{regularly decreasing}
 and let $c_r=2^{r\lceil 2r+3\rceil +1}$.
 \begin{align*}
 &\text{If}\quad
 &\sigma(n) &\leq L(n)
 \quad&\text{for all}\qquad
 &n\in\IN,\\
 &\text{then}\quad
 &e^{\rm ran}(Q_{2n}^r) &\leq c_r\,n^{-1/2} L(n)
 \quad&\text{for all}\qquad
 &n\in\IN.
 \end{align*}
 In particular:
 \begin{align*}
 e^{\rm det}\brackets{n,F\hookrightarrow L^2, \Lambda^{\rm all}}
  &\preccurlyeq n^{-p} \ln^q n\\
 \Rightarrow\quad
 & e^{\rm ran}\brackets{n,I, \Lambda^{\rm std}}
  \preccurlyeq n^{-p-1/2} \ln^q n.
 \end{align*}
\end{cor}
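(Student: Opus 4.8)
The first assertion I would obtain immediately from Theorem~\ref{explicit theorem} and the variance estimate stated just above the corollary. Starting from
\begin{equation*}
 \IE\abs{I(f)-Q_{2n}^r(f)}^2 \leq \frac{1}{n}\, \IE\norm{f-A_n^r f}_2^2,
\end{equation*}
I would take the supremum over $f\in F_\circ$ on both sides. Since $e^{\rm ran}(A_n^r)^2=\sup_{f\in F_\circ}\IE\norm{f-A_n^r f}_2^2$, the right-hand side becomes $n^{-1} e^{\rm ran}(A_n^r)^2$, which Theorem~\ref{explicit theorem} bounds by $n^{-1} c_r^2\, L(n)^2$. Taking square roots gives $e^{\rm ran}(Q_{2n}^r)\leq c_r\, n^{-1/2} L(n)$, with exactly the claimed constant.

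For the ``in particular'' part the plan is to reduce to the first assertion by passing to a suitable majorant $L$ of the singular-value sequence. From the hypothesis $e^{\rm det}(n,F\hookrightarrow L^2,\Lambda^{\rm all})=\sigma(n+1)\preccurlyeq n^{-p}\ln^q n$ and the equivalence $\ln n\asymp 1+\log_2 n$, I would first rewrite this as $\sigma(n)\preccurlyeq n^{-p}(1+\log_2 n)^q$. Then I would pick a constant $C>0$ large enough that $L(n):=C\, n^{-p}(1+\log_2 n)^q$ satisfies $\sigma(n)\leq L(n)$ for \emph{all} $n\in\IN$: the tail is covered by the asymptotic bound, and the finitely many small indices are absorbed into $C$. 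As recorded in the introduction, this $L$ satisfies \eqref{regularly decreasing} with $r=p>0$, so the first assertion applies with this $r$ and this $L$ and yields $e^{\rm ran}(Q_{2n}^p)\leq c_p\, n^{-1/2} L(n)\preccurlyeq n^{-p-1/2}\ln^q n$.

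Finally I would translate the bound for the particular algorithm $Q_{2n}^p$, which samples at most $2n$ function values, into a bound on $e^{\rm ran}(m,I,\Lambda^{\rm std})$: use $Q_m^p$ for even $m$ and $Q_{m-1}^p$ for odd $m$, and note $n^{-p-1/2}\ln^q n\asymp m^{-p-1/2}\ln^q m$. I do not expect any genuine obstacle: the whole statement is a corollary of Theorem~\ref{explicit theorem} together with the unbiasedness and variance identity of $Q_{2n}^r$. The only step demanding a little attention is the construction of $L$ in the second part, where the majorant must dominate $\sigma(n)$ for \emph{every} $n$, not merely asymptotically, while remaining regularly decreasing and of the correct order $n^{-p}\ln^q n$; this is routine bookkeeping.
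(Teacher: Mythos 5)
Your proposal is correct and follows exactly the route the paper intends: the paper gives no separate proof of Corollary~\ref{integration cor}, deriving it directly from the displayed variance estimate $\IE\abs{I(f)-Q_{2n}^r(f)}^2\leq n^{-1}\,\IE\norm{f-A_n^rf}_2^2$ combined with Theorem~\ref{explicit theorem}, and your handling of the ``in particular'' part (building an everywhere-valid regularly decreasing majorant $L(n)=C\,n^{-p}(1+\log_2 n)^q$ and converting the bound for $Q_{2n}^p$ into one for $e^{\rm ran}(m,I,\Lambda^{\rm std})$) is the standard bookkeeping the paper leaves implicit.
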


The result on the order of convergence is quite general but not always optimal.
An example is given by integration with respect to the Lebesgue measure $\mu$
on the Sobolev space $F$ with dominating mixed smoothness $r$
on the $d$-dimensional unit cube, 
as treated by Novak and the author \cite{kn} and
Ullrich \cite{ullrich}. In this case, we have
\begin{align*}
 e^{\rm det}\brackets{n,F\hookrightarrow L^2, \Lambda^{\rm all}}
  &\asymp n^{-r} \ln^{r(d-1)} n,\\
 e^{\rm ran}\brackets{n,I, \Lambda^{\rm std}}
  &\asymp n^{-r-1/2}.
\end{align*}
The main strength of Corollary \ref{integration cor} is
that it provides us with unbiased methods for high dimensional integration
achieving a small error with a modest number of function values.

\begin{ex}[Integration of mixed order Sobolev functions on the torus]
\label{mix example torus integration}
Like in Example~\ref{mix example torus},
let $F$ be the Sobolev space of dominating mixed smoothness $r$ on the $d$-torus
and let $\mu$ be the Lebesgue measure.
Among all randomized algorithms for multivariate integration in $F$
the randomized Frolov algorithm $Q_n^*$ is known to have the optimal error rate.
It is shown by Ullrich \cite{ullrich} that there is some constant $c>2^d$ such that
\begin{equation}
\label{Frolov bound}
 e^{\rm ran}\brackets{Q^*_n}
  \leq c\,n^{-r-1/2} \quad\quad\text{for } n\in\IN.
\end{equation}
However, this estimate is trivial,
if $n$ is not exponentially large in $d$.
For smaller values of $n$, an error less than one is guaranteed 
by the direct simulation 
\begin{equation*}
 S_n(f)=\frac{1}{n} \sum_{j=1}^n f(X_j),
\end{equation*}
with independent and uniformly distributed random variables $X_j$.
It satisfies
\begin{equation}
\label{Standard bound}
 e^{\rm ran}\brackets{S_n}
  \leq n^{-1/2} \quad\quad\text{for } n\in\IN.
\end{equation}
However, this error bound converges only slowly, as $n$ tends to infinity.
It does not reflect the smoothness of the integrands at all.
The above method 
also guarantees nontrivial error bounds for smaller values of $n$,
but converges faster than $S_n$.
Relation~(\ref{preasymptotic bound mix}) immediately yields
that
\begin{equation}
\label{preasymptotic bound mix integration}
 e^{\rm ran}\brackets{Q^p_{2n}}
  \leq C\,n^{-p-1/2} \quad\quad\text{for } n\in\IN
\end{equation}
with $p=\frac{r}{2+\ln d}$ and $C=2^{p \lceil 2p+4\rceil +1}$.
For example, let $d=500$ and $r=8$.
For one million function values, the estimate~(\ref{Frolov bound})
for the Frolov algorithm is larger than one,
the estimate~(\ref{Standard bound}) for the direct simulation gives the error $10^{-3}$
and the estimate~(\ref{preasymptotic bound mix integration}) for our new algorithm
gives an error smaller than $5\cdot10^{-7}$.
\end{ex}

\begin{rem}[Implementation of these algorithms]
We are able to implement the algorithms $Q_{2n}^r$
under the following assumptions:
\begin{itemize}
 \item We are able to implement $A_n^r$.
 This issue is discussed in Remark~\ref{implementation approx}.
 \item We know the integrals $I(b_j)$ of the eigenfunctions $b_j\in\mathcal{B}$
 for all $j\leq 2^{-\ell_r}n$.
 \item We can sample from the probability distribution $\mu$.
\end{itemize}
In the above example, the implementation is particularly easy,
since $\mathcal{B}$ is the Fourier basis and
all the random variables are independent and uniformly distributed on the unit cube.
\end{rem}

\section{A weaker type of a priori knowledge}
\label{knowledge section}

In the previous sections,
we assumed that the target function $f$
is contained in the unit ball of a Hilbert space $F$
which is compactly embedded into $L^2$,
that is
\begin{equation}
 \label{stronger assumption}
 \norm{f}_F
  \leq 1.
\end{equation}
As we have seen in Section~\ref{setting section},
the space $F$ induces a nonincreasing sequence $\sigma$,
the singular numbers
\begin{equation*}
  \sigma(1) \geq \sigma(2) \geq \hdots > 0
\end{equation*}
of the embedding $F\hookrightarrow L^2$.
This sequence is either finite or tends to zero.
It also induces a nested sequence $V$ of subspaces
\begin{equation*}
  V_0 \subset V_1 \subset V_2 \subset \hdots \subset L^2,
  \qquad
  \dim(V_m) = m,
\end{equation*}
 where $V_m$ is spanned by the first $m$ elements
 of the singular value decomposition.
 
 In turn, any such pair $\brackets{\sigma,V}$
 induces a Hilbert space $F$ which is compactly embedded into $L^2$.
 We choose $b_m$ as an element of the orthogonal complement 
 of $V_{m-1}$ in $V_{m}$ with $\norm{b_m}_2=1$ 
 and define $F$ by its orthonormal basis 
 $\{\sigma(1) b_1,\sigma(2) b_2,\hdots\}$. 
 It has the scalar product
 \begin{equation*}
 \scalar{f}{g}_F =
  \sum \sigma(k)^{-2} \scalar{f}{b_k}_2 \overline{\scalar{g}{b_k}}_2,
 \end{equation*}
 where we take the sum over the whole sequence $\sigma$.
 It is not hard to see that the correspondence between $F$ 
 and the pair $\brackets{\sigma,V}$ is bijective
 up to the choice of the spaces $V_m$ 
 for which we have $\sigma(m+1)=\sigma(m)$.
 
 Let $P_m$ denote the orthogonal projection onto $V_m$ in $L^2$.
 It is readily verified that our assumption 
 \eqref{stronger assumption} on the target function $f$ implies that
 \begin{equation}
 \label{weak assumption}
  \norm{f-P_m f}_2^2 \leq \sigma(m+1)^2
  \quad\text{for all}
  \quad
  m\in\IN_0.
 \end{equation}
 In general, however, \eqref{weak assumption} is strictly weaker 
 than \eqref{stronger assumption}.
 For example, if $\sigma(k)=1/k$ for $k\in\IN$, the function 
 \begin{equation*}
  f=\sum (\sigma(k)^2-\sigma(k+1)^2)^{1/2}\, b_k
 \end{equation*}
 satisfies \eqref{weak assumption}
 but is not even contained in the space $F$.
 In Section~\ref{main section},
 we constructed a randomized algorithm $A_n^r: L^2 \to V_m$
 and proved upper bounds on the mean square 
 error $\IE \norm{f- A_n^r(f)}_2^2$
 for any $f$ from \eqref{stronger assumption}.
 In fact, the same error bounds hold for any $f$ from \eqref{weak assumption}.
 We state this as Theorem~\ref{subspaces thm}.
 
 \begin{thm}
  \label{subspaces thm}
  Let $(D,\mathcal{A},\mu)$ be a measure space 
  and $L^2=L^2(D,\mathcal{A},\mu)$. 
  For any $m\in\IN_0$ let $V_m$ be an $m$-dimensional
  subspace of $L^2$ such that $V_m\subset V_{m+1}$ 
  and let $P_m: L^2\to V_m$ be the orthogonal projection onto $V_m$.
 Assume that $f\in L^2$ satisfies
 \begin{equation}
 \label{approximability}
  \norm{f-P_m f}_2^2 \leq \varepsilon(m)
  \qquad\text{for all}\quad m\in \IN_0 
 \end{equation}
 with some $\varepsilon:\IN_0\to (0,\infty)$.
 Then the randomized algorithm
 $Q_m:L^2 \to V_m$
 as defined below satisfies
 \begin{equation*}
  \IE \norm{f- Q_m f}_2^2 \leq 2\, \varepsilon(m)
 \end{equation*}
 for any $m=2^k$ and $k\in\IN_0$.
 The number of requested function values is at most
 \begin{equation}
 \label{costs}
  n\brackets{Q_m} 
  \leq 4\, m \cdot \max\limits_{0\leq j \leq k} 
  \left\lceil\frac{\varepsilon(\lfloor 2^{j-1}\rfloor)}{\varepsilon(2^j)}\right\rceil.
 \end{equation}
 \end{thm}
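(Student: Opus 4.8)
The plan is to reuse the algorithm family $M^{(k)}_{\boldsymbol n,\boldsymbol m}$ from Section~\ref{main section} without change, after noticing that its construction never referred to the auxiliary Hilbert space $F$: from the subspaces $V_m$ alone one picks unit vectors $b_m\in L^2$ spanning the orthogonal complement of $V_{m-1}$ in $V_m$, forms the densities $u_m=\frac1m\sum_{j=1}^m\abs{b_j}^2$ and the probability measures $\mu_m$, and defines $M^{(k)}_{\boldsymbol n,\boldsymbol m}$ verbatim. The algorithm $Q_m$ will be one member of this family, with the sequences $\boldsymbol n$ and $\boldsymbol m$ chosen in terms of $\varepsilon$ and of $m=2^k$.

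The first step is to re-establish the recursion of Lemma~\ref{error lemma} in this abstract setting. Its proof carries over word for word, only the tail estimate being replaced: where the original argument used $f\in F_\circ$ to bound $\sum_{j>m_k}\abs{\scalar{f}{b_j}_2}^2$ by $\sigma(m_k+1)^2$, one instead writes, for nondecreasing $\boldsymbol m$ (so that $M^{(i)}_{\boldsymbol n,\boldsymbol m}f\in V_{m_i}$ for all $i\le k$),
\begin{equation*}
 \IE\norm{f-M^{(k)}_{\boldsymbol n,\boldsymbol m}f}_2^2
 =\IE\sum_{j=1}^{m_k}\abs{\scalar{f-M^{(k)}_{\boldsymbol n,\boldsymbol m}f}{b_j}_2}^2+\norm{f-P_{m_k}f}_2^2 ,
\end{equation*}
and bounds the last term by $\varepsilon(m_k)$ using \eqref{approximability}. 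The head is handled exactly as in the proof of Lemma~\ref{error lemma} (variance of the plain Monte Carlo estimator with $n_k$ i.i.d.\ nodes of law $\mu_{m_k}$, then $\sum_{j=1}^{m_k}\abs{b_j}^2=m_k u_{m_k}$ and Fubini), which gives
\begin{equation*}
 \IE\norm{f-M^{(k)}_{\boldsymbol n,\boldsymbol m}f}_2^2
 \le\frac{m_k}{n_k}\,\IE\norm{f-M^{(k-1)}_{\boldsymbol n,\boldsymbol m}f}_2^2+\varepsilon(m_k),
\end{equation*}
together with the base case $\IE\norm{f-M^{(0)}_{\boldsymbol n,\boldsymbol m}f}_2^2=\norm{f}_2^2\le\varepsilon(0)$.

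The second step chooses the sequences and reads off both claims. For $m=2^k$ I would take
\begin{equation*}
 m_i=\bigl\lfloor 2^{i-1}\bigr\rfloor,\qquad
 n_i=2^i\left\lceil\frac{\varepsilon(\lfloor 2^{i-2}\rfloor)}{\varepsilon(2^{i-1})}\right\rceil,\qquad 1\le i\le k+1,
\end{equation*}
and set $Q_m=M^{(k+1)}_{\boldsymbol n,\boldsymbol m}$. Then $m_0=0$, $m_i=2^{i-1}$ for $i\ge1$, the sequence $\boldsymbol m$ is nondecreasing with $m_{k+1}=m$, and since $\lfloor 2^{i-2}\rfloor=m_{i-1}$ and $2^{i-1}=m_i$ one has $m_i/n_i\le\frac12\,\varepsilon(m_i)/\varepsilon(m_{i-1})$. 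Inserting this into the recursion, a one-line induction on $i$ shows $\IE\norm{f-M^{(i)}_{\boldsymbol n,\boldsymbol m}f}_2^2\le 2\,\varepsilon(m_i)$ for all $0\le i\le k+1$: the case $i=0$ is $\norm{f}_2^2\le\varepsilon(0)$, and in the inductive step the previous term $\frac{m_i}{n_i}\IE\norm{f-M^{(i-1)}_{\boldsymbol n,\boldsymbol m}f}_2^2$ is at most $\frac12\cdot 2\varepsilon(m_i)=\varepsilon(m_i)$. At $i=k+1$ this yields $\IE\norm{f-Q_mf}_2^2\le 2\varepsilon(m)$. For the cost, pulling the largest ceiling out of the geometric sum,
\begin{equation*}
 n\brackets{Q_m}=\sum_{i=1}^{k+1}n_i
 \le\Bigl(\max_{1\le i\le k+1}\bigl\lceil\varepsilon(\lfloor 2^{i-2}\rfloor)/\varepsilon(2^{i-1})\bigr\rceil\Bigr)\sum_{i=1}^{k+1}2^i
 \le 4m\max_{0\le j\le k}\left\lceil\frac{\varepsilon(\lfloor 2^{j-1}\rfloor)}{\varepsilon(2^j)}\right\rceil,
\end{equation*}
using $\sum_{i=1}^{k+1}2^i<2^{k+2}=4m$ and reindexing $j=i-1$; this is \eqref{costs}.

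I expect the main difficulty to be bookkeeping rather than mathematics: keeping the shifted index straight (the $k+1$ steps $i=1,\dots,k+1$ against the range $j=0,\dots,k$ that appears in \eqref{costs}) and treating the boundary value $m_0=0$ consistently with the convention $\lfloor 2^{-1}\rfloor=0$, so that the base case and the first step obey the same formula. One small point to flag: the sequence $\boldsymbol n$ above need not be nondecreasing, but the proof of the recursion uses only that $\boldsymbol m$ is nondecreasing, so this does no harm; alternatively one may replace $n_i$ by $2^i$ times the running maximum of the ceilings, which restores monotonicity of $\boldsymbol n$ and leaves every estimate intact. No genuinely new inequality beyond those already in the proof of Lemma~\ref{error lemma} is required.
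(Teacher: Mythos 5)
Your proposal is correct and follows essentially the same route as the paper: the identical choices $m_j=2^{j-1}$, $n_j=2^j\lceil\varepsilon(\lfloor 2^{j-2}\rfloor)/\varepsilon(2^{j-1})\rceil$ and $Q_m=M^{(k+1)}_{\boldsymbol n,\boldsymbol m}$, the same adaptation of Lemma~\ref{error lemma} with the tail term replaced by $\norm{f-P_{m_k}f}_2^2\leq\varepsilon(m_k)$, and the same induction; you in fact supply more detail than the paper's sketch. Your side remark that only the monotonicity of $\boldsymbol m$ (not of $\boldsymbol n$) is used in the recursion is accurate and worth making, since the paper's $\boldsymbol n$ here need not be nondecreasing.
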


To define the algorithm $Q_m$ we
choose $b_n$ in the orthogonal complement
of $V_{n-1}$ in $V_n$ with $\norm{b_n}_2=1$
for all $n\in\IN$.
For $j\in\IN$, we set
\begin{equation*}
 m_j=2^{j-1}
 \qquad\text{and}\qquad
 n_j=2^j \left\lceil 
 \frac{\varepsilon(\lfloor 2^{j-2}\rfloor)}{\varepsilon(2^{j-1})}
 \right\rceil.
\end{equation*}
Then the method $M^{(k)}_{\boldsymbol n,\boldsymbol m}: L^2\to V_{m_k}$
for $k\in\IN_0$ can be defined as in Section~\ref{main section}.
Given $m=2^k$ for some $k\in\IN_0$,
we define $Q_m=M^{(k+1)}_{\boldsymbol n,\boldsymbol m}: L^2\to V_m$.

\begin{proof}
 We only sketch the proof since it is very similar to
 the proof of Theorem~\ref{explicit theorem}.
 Just like in Lemma~\ref{error lemma}, we can show 
 for any $k\in\IN_0$ that
 \begin{equation*}
  \IE \norm{f- M^{(k+1)}_{\boldsymbol n,\boldsymbol m} f}_2^2
  \leq 
  \frac{m_{k+1}}{n_{k+1}}\, \IE \norm{f- M^{(k)}_{\boldsymbol n,\boldsymbol m} f}_2^2
  + \varepsilon(m_{k+1}).
 \end{equation*}
 The statement follows by induction on $k\in \IN_0$.
\end{proof}

Note that we did not impose any condition on 
the upper bound $\varepsilon:\IN_0\to (0,\infty)$.
If $\varepsilon$ is regularly decreasing, the maximum in \eqref{costs}
is bounded by a constant which does not depend on $m$.
Roughly speaking, the algorithm $Q_m$ admits a
mean square error of order $\varepsilon(m)$
with a sample size of order $m$
for any $f$ from \eqref{approximability}.

\begin{rem}[Optimal approximation within a subspace]
\label{knowledge remark}
Let $D$ be a Borel subset of $\IR^d$
with positive Lebesgue measure,
$\mathcal{A}$ be the Borel sigma algebra on $D$
and $\mu$ be a probability measure on $(D,\mathcal{A})$.
The best approximation of $f\in L^2(D,\mathcal{A},\mu)$ within the
subspace $V_m$ is given by $P_m f$.
Its error is given by the number
\begin{equation*}
 e_m(f)=\inf\limits_{v\in V_m} \norm{f-v}_2 = \norm{f-P_m f}_2.
\end{equation*}
In general, we cannot find $P_m f$
by sampling only a finite number 
of function values of $f$.
What we can provide, is a random
approximation $v_m$ within $V_m$ 
whose root mean square error 
\begin{equation*}
 \brackets{\IE \norm{f-v_m}_2^2}^{1/2}
\end{equation*}
is close to $e_m(f)$.
If we know the numbers $e_m(f)$ for all $m\in\IN$
(or some good upper bound)
and if they are regularly decreasing,
we can choose $v_m$ as the output of the method $Q_m$
from Theorem~\ref{subspaces thm},
which uses a sample size of order $m$.
But even if we do not know anything about $f\in L^2$,
we can still find an approximation $v_m$ like above.
We only need the mild assumption that $V_m$ consists of
functions defined everywhere on $D$ and that for each $x\in D$, there is 
some $v\in V_m$ with $v(x)\neq 0$.
We can then choose $v_m$ as the output of a 
weighted least squares method,
see Cohen and Migliorati \cite[Theorem~2.1~(iv)]{cm}.
The sample size of this method, however, is
at least of order $m\ln m$.
In both cases, the involved proportionality constants
are independent of the dimension of the domain $D$.
\end{rem}

\begin{ack}
 I wish to thank Erich Novak, Robert Kunsch, Winfried Sickel and two
 anonymous referees, whose comments and questions
 led to the present generality of the theorems.
\end{ack}

\raggedright{

}

\end{document}